\newcommand{\norm}[1]{\left\lVert#1\right\rVert}
\newtheorem{theorem}{Theorem}
\newtheorem{lemma}{Lemma}
\newtheorem{assumption}{Assumption}
\newtheorem{remark}{Remark}
\def\matt#1{\begin{bmatrix}#1\end{bmatrix}}
\def\BibTeX{{\rm B\kern-.05em{\sc i\kern-.025em b}\kern-.08em
    T\kern-.1667em\lower.7ex\hbox{E}\kern-.125emX}}
\def\diag{{\tt diag}}
\begin{document}

% \LARGE \bf
\title{
Distributed Optimization under Edge Agreement with Application in Battery Network Management
}

\author{Zehui Lu, \ Shaoshuai Mou
\thanks{The authors are with the School of Aeronautics and Astronautics, Purdue University, IN 47907, USA {\tt\small \{lu846,mous\}@purdue.edu} }
\thanks{This research is supported in part by the NASA University Leadership Initiative (ULI) project (grant no. 80NSSC20M0161), the NSF project (grant no. 2120430), and a gift funding from Northrop Grumman Corporation.}
}

\maketitle

\begin{abstract}
This paper investigates a distributed optimization problem under edge agreements, where each agent in the network is also subject to local convex constraints. Generalized from the concept of consensus, a group of edge agreements represents the constraints defined for neighboring agents, with each pair of neighboring agents required to satisfy one edge agreement constraint.
Edge agreements are defined locally to allow more flexibility than a global consensus, enabling heterogeneous coordination within the network.
This paper proposes a discrete-time algorithm to solve such problems, providing a theoretical analysis to prove its convergence. Additionally, this paper illustrates the connection between the theory of distributed optimization under edge agreements and distributed model predictive control through a distributed battery network energy management problem. This approach enables a new perspective to formulate and solve network control and optimization problems.
\end{abstract}

\begin{IEEEkeywords}
Distributed Algorithms/Control, Optimization, Edge Agreement, Networked Control Systems
\end{IEEEkeywords}

\section{Introduction} \label{sec:intro}

Recent research has seen a growing interest in distributed algorithms for networked multi-agent systems (MAS).
These algorithms aim to achieve global objectives through local coordination among network agents.
The concept of consensus, where all agents agree on a particular quantity, has become a cornerstone in developing distributed algorithms for MAS \cite{cao2008agreeing}.
Key research areas include multi-agent formation control \cite{chen2019controlling, rai2024safe}, multi-agent optimal control \cite{lu2022cooperative}, distributed computation \cite{notarnicola2017distributed,wang2019distributed}, and distributed optimization \cite{nedic2010constrained,tang2020distributed,srivastava2021network,chen2023differentially,rikos2024distributed}.
% \cite{foderaro2014distributed}
Among these topics, distributed optimization has received significant attention due to its ability to solve network control or optimization problems, such as those in power grids \cite{xie2019distributed, patari2022distributed}, battery energy storage systems (BESS) \cite{fang2016cooperative, farakhor2023scalable}, robot co-design \cite{lu2024distributed_co_design}, and machine learning \cite{koloskova2021improved}.
In these problems, each agent or node knows its local objective function and constraints, aiming to minimize the sum of all local objective functions with only local information and neighboring information while satisfying all local constraints and achieving consensus on decision variables.
% Primal-dual methods \cite{duchi2011dual, notarnicola2017distributed, falsone2017dual, engelmann2020decomposition} update and communicate both primal and dual variables to optimize local objectives via dual decomposition under consensus constraints.
% To improve convergence rates, a Lagrangian dual vector representing consensus errors is introduced in reference \cite{gharesifard2013distributed, liu2015second, qiu2016distributed, zeng2016distributed}.
% Wang et al. \cite{wang2022consensus} incorporate the integral of consensus error into the update rule, achieving exponential convergence and robustness against bounded noise.
% Tang et al. \cite{tang2021communication} propose a communication-efficient algorithm for distributed optimization.
% \hl{Additionally}, some studies explore event-triggered communication mechanisms \cite{liu2016event, xie2019distributed, tsang2022stochastic} and quantized communication in distributed optimization \cite{liu2016event, rikos2024distributed}.

Despite the development of numerous distributed algorithms based on consensus, they are typically designed for scenarios where all agents must reach the same value regarding a specific quantity.
However, practical applications often require heterogeneous coordination among agents beyond simple consensus, necessitating edge-dependent constraints across the entire network. Reference \cite{lu2024distributed} introduces the concept of edge agreements, constraints defined for neighboring agents, with each pair of neighboring agents corresponding to one such constraint.
While global consensus is a special case of edge agreements, the latter allows for more flexibility and can handle heterogeneous coordination among neighboring agents.
This flexibility facilitates solving the aforementioned network control or optimization problem by distributed model predictive control (MPC) \cite{mota2014distributed, negenborn2014distributed, houska2022distributed}, where the edge agreements can represent the dynamic equality constraint and coordination between every two neighboring agents.
Reference \cite{lu2024distributed} proposes a continuous-time algorithm for solving distributed optimization under edge agreements.
However, this algorithm does not account for additional constraints on the decision variables, which limits its practical application in distributed Model Predictive Control (MPC) \cite{negenborn2014distributed, houska2022distributed}.
In distributed MPC, it is essential to satisfy state and control constraints, such as maintaining intermediate states, controls, and terminal states within convex sets for each agent \cite{houska2022distributed,arauz2022cyber}.
These requirements impose additional convex or linear constraints on each agent's decision variable, which are not addressed in \cite{lu2024distributed}.

Therefore, this paper aims to develop a discrete-time distributed algorithm for solving distributed optimization problems under edge agreements, where each agent is also subject to local convex constraints. Additionally, this paper seeks to bridge the theory of distributed optimization under edge agreements with distributed MPC, exemplified by solving a distributed battery network energy management problem.

This paper is organized as follows.
Section~\ref{sec:problem_formulation} formulates the problem of interest mathematically and introduces some necessary assumptions and notations.
Section~\ref{sec:main_result} presents the theoretical results, including the proposed distributed algorithm, the convergence analysis, and a numerical example.
Section~\ref{sec:app} introduces a distributed battery network energy management problem, and presents how to apply the proposed algorithm to solve this problem with some simulation results.
Section~\ref{sec:conclusion} concludes this paper.

\textbf{\emph{Notations.}}  
Let $\llbracket a,b \rrbracket$ denote a set of all integers between $a \in \mathbb{Z}$ and $b \in \mathbb{Z}$, with both ends included.
Let $\mathbb{Z}_+$ denote the positive integer set.
Let $\vert \mathcal{N} \vert$ denote the cardinality of a set $\mathcal{N}$.
Let $\norm{\cdot}$ denote the Euclidean norm.
Let $\otimes$ denote the Kronecker product.
Let $\text{col}\{ \boldsymbol{v}_1, \cdots, \boldsymbol{v}_a \}$ denote a column stack of elements $\boldsymbol{v}_1, \cdots, \boldsymbol{v}_a $, which may be scalars, vectors or matrices,
i.e. $\text{col}\{ \boldsymbol{v}_1, \cdots, \boldsymbol{v}_a \} \triangleq {\matt{{\boldsymbol{v}_1}^{\top} & \cdots & {\boldsymbol{v}_a}^{\top}}}^{\top}$.
Let $\boldsymbol{0}_{m\times n}$ and $\boldsymbol{1}_{m\times n}$ be a matrix in $\mathbb{R}^{m \times n}$ with all zeros and ones, respectively; simplify the notation as $\boldsymbol{0}_{m}$ and $\boldsymbol{1}_{m}$ when $n=1$.
Let $\boldsymbol{I}_n \in \mathbb{R}^{n \times n}$ denote an identity matrix in $\mathbb{R}^{n \times n}$.

% Let $\repdiag(\boldsymbol{A}, p)$ be a block diagonal matrix in $\mathbb{R}^{mp\times np}$ with $p$ diagonal blocks, where each diagonal block is $\boldsymbol{A} \in \mathbb{R}^{m\times n}$;
% $\repdiag(\boldsymbol{A}, p) \equiv \boldsymbol{I}_p \otimes \boldsymbol{A}$.

\section{Problem Formulation} \label{sec:problem_formulation}

Consider a networked multi-agent system consisting of $m$ agents labeled as $\mathcal{V}=\{1,\cdots,m\}$. Each agent $i$ can update its state $\boldsymbol{x}_i \in \mathbb{R}^n$ at every discrete time $k$ given bidirectional communication within its nearby neighbors denoted by a set $\mathcal{N}_i$.
Denote agent $i$'s state at time $k$ as $\boldsymbol{x}_{i,k}$.
Here, assume $i \notin \mathcal{N}_i$. Let $\mathbb{G} = \{ \mathcal{V}, \mathcal{E} \}$ denote the undirected graph such that an undirected edge $(i,j) \in \mathcal{E} $ if and only if agent $i$ and agent $j$ are neighbors. Let $\bar{m} \triangleq  \vert \mathcal{E} \vert $ the number of edges in $\mathbb{G}$. Suppose each agent $i$ only knows its private objective function $\boldsymbol{f}_i: \mathbb{R}^n \mapsto \mathbb{R}$.

The problem of interest to develop a discrete-time update rule for each agent $i$ to update $\boldsymbol{x}_i \in \mathcal{X}_i$ such that each $\boldsymbol{x}_i$ converges to a constant vector, which minimizes the global sum of local objective functions $\sum_{i=1}^{m} \boldsymbol{f}_i$ and satisfies the edge agreement constraint, i.e.
\begin{mini!}|s|
{{\{\boldsymbol{x}_i\}}_{i=1}^m}{ \textstyle\sum_{i=1}^{m} \boldsymbol{f}_i(\boldsymbol{x}_i) \label{problem_interest:obj}}
{\label{problem_interest}}{}
\addConstraint{ \boldsymbol{x}_i \in \mathcal{X}_i, \ \forall i \in \mathcal{V} \label{problem_interest:set}}
\addConstraint{ \boldsymbol{A}_{ij}(\boldsymbol{x}_i-\boldsymbol{x}_j)=\boldsymbol{b}_{ij}, \ \forall (i,j) \in \mathcal{E}. \label{problem_interest:edge}}
\end{mini!}
Here, $\mathcal{X}_i \subset \mathbb{R}^n$ is a nonempty closed convex set; $\boldsymbol{A}_{ij} \in \mathbb{R}^{d_{ij} \times n}$ and $\boldsymbol{b}_{ij} \in \mathbb{R}^{d_{ij}}$ are constant matrices, and privately known to agent $i$;
$d_{ij}$ is the dimension of edge agreement associated with edge $(i,j)$;
$\boldsymbol{f}_i: \mathbb{R}^n \mapsto \mathbb{R}$ is privately known to agent $i$. Denote $\boldsymbol{f}(\boldsymbol{x}) \triangleq \sum_{i=1}^{m} \boldsymbol{f}_i(\boldsymbol{x}_i)$, where $\boldsymbol{x} \triangleq \text{col}\{ \boldsymbol{x}_1, \cdots, \boldsymbol{x}_m \} \in \mathbb{R}^{mn}$, and $\boldsymbol{f}:\mathbb{R}^{mn} \mapsto \mathbb{R}$.
$\boldsymbol{f}$ is assumed to be closed, proper, and convex in $\mathbb{R}^{mn}$.
\begin{remark}\label{remark:linear_con}
The convex set $\mathcal{X}_i$ may represent the intersection of multiple local convex sets for agent $i$.
Section~\ref{sec:app} introduces a distributed MPC problem that involves a linear equality constraint of the form $\bar{\boldsymbol{A}}_i \boldsymbol{x}_i = \bar{\boldsymbol{B}}_i$.
Solving the problem of interest \eqref{problem_interest} with or without these local linear equality constraints is essentially equivalent, as the nonempty intersection of a closed convex set and the hyperplane defined by the linear equations remains closed and convex. Additional discussion on this topic can be found in Section~\ref{sec:app}.
\end{remark}

\begin{remark}
In the context of distributed MPC, the decision variable $\boldsymbol{x}_i$ typically consists of the states and controls of agent $i$ over a prediction horizon.
The system dynamics for agent $i$ are forward-propagated as a set of linear equations involving $\boldsymbol{x}_i$.
Coordination requirements between agents can be formulated as edge agreements, while the convex set $\mathcal{X}_i$ could represent the state and control bounds.
Further details on these topics are provided in Section~\ref{sec:app}.
\end{remark}

\subsection{Assumptions}

% \section{Preliminary Results} \label{sec:prelim}
This section introduces some assumptions and notations.
\begin{assumption} \label{assum:existence}
% \emph{(\textbf{Existence})} \hl{$\mathcal{X}_i \neq \emptyset, \ \forall i \in \mathcal{V}$.}
% $\mathcal{X}_i \cap \mathcal{X}_j \neq \emptyset, \ \forall i, j \in \mathcal{V}$.
There exist at least one solution $\boldsymbol{x}^* \in \mathbb{R}^{mn}$, $\boldsymbol{x}^* \triangleq \emph{col}\{ \boldsymbol{x}^*_1, \cdots, \boldsymbol{x}^*_m \}$, to Problem \eqref{problem_interest} that satisfy the edge agreements \eqref{problem_interest:edge}.
\end{assumption}

Assumption \ref{assum:existence} guarantees that Problem \eqref{problem_interest} has solutions, which implies that $\mathcal{X}_i \neq \emptyset, \ \forall i \in \mathcal{V}$;
$\mathcal{X}_i \cap \mathcal{X}_j \neq \emptyset, \ \forall i, j \in \mathcal{V}$.
The following assumption is adopted to guarantee the consistency of edge agreements \eqref{problem_interest:edge}.

\begin{assumption} \label{assum:consistency} \emph{(\textbf{Consistency})}
Given the undirected graph $\mathbb{G} = \{ \mathcal{V}, \mathcal{E} \}$, the linear constraints for edge agreements \eqref{problem_interest:edge} are consistent with each other for each pair of neighboring agents $(i,j) \in \mathcal{E}$, that is, $\boldsymbol{A}_{ij} = \boldsymbol{A}_{ji}, \  \boldsymbol{b}_{ij} = - \boldsymbol{b}_{ji}$.
\end{assumption}
The edge agreements \eqref{problem_interest:edge} are equivalent to some constraints using projection matrices. Let $\boldsymbol{P}_{ij} \in \mathbb{R}^{n \times n}$ denote a projection matrix and $\bar{\boldsymbol{b}}_{ij} \in \mathbb{R}^n$ denote as follows
\begin{equation} \label{eq:mat_define}
\boldsymbol{P}_{ij} = \boldsymbol{A}_{ij}^{\top} (\boldsymbol{A}_{ij} \boldsymbol{A}_{ij}^{\top})^{-1} \boldsymbol{A}_{ij}, \  \bar{\boldsymbol{b}}_{ij} = \boldsymbol{A}_{ij}^{\top} (\boldsymbol{A}_{ij} \boldsymbol{A}_{ij}^{\top})^{-1} \boldsymbol{b}_{ij}.
\end{equation}
Then denote the followings:
\begin{subequations}
\begin{align}
\bar{\boldsymbol{P}} &= \diag \{ \boldsymbol{P}_{i_1, j_1}, \boldsymbol{P}_{i_2, j_2}, \cdots, \boldsymbol{P}_{i_{\bar{m}},  j_{\bar{m}}} \} \in \mathbb{R}^{\bar{m}n \times \bar{m}n}, \label{eq:diag_mat_define} \\
\bar{\boldsymbol{b}} &= \text{col} \{ \bar{\boldsymbol{b}}_{i_1, j_1}, \cdots, \bar{\boldsymbol{b}}_{i_l, j_l}, \cdots, \bar{\boldsymbol{b}}_{i_{\bar{m}},  j_{\bar{m}}}  \}  \in \mathbb{R}^{\bar{m}n}, \label{eq:edge_b_define}
\end{align}
\end{subequations}
where $(i_l, j_l)$ denotes the $l$-th edge of the graph $\mathbb{G}$. By \eqref{eq:mat_define} and \eqref{eq:diag_mat_define}, note that $\bar{\boldsymbol{P}}^2 = \bar{\boldsymbol{P}}$ and $\bar{\boldsymbol{P}}^{\top} = \bar{\boldsymbol{P}}$. Based on the definition of $\boldsymbol{P}_{ij}$ and $\bar{\boldsymbol{b}}_{ij}$ in \eqref{eq:mat_define} and the fact that $\boldsymbol{P}_{ij} \bar{\boldsymbol{b}}_{ij} = \boldsymbol{b}_{ij}$, the linear constraints \eqref{problem_interest:edge} for edge agreements are equivalent to
\begin{equation} \label{eq:edge_agreement_each_after}
\boldsymbol{P}_{ij} (\boldsymbol{x}_i-\boldsymbol{x}_j-\bar{\boldsymbol{b}}_{ij}) = \boldsymbol{0}, \ \forall (i,j) \in \mathcal{E}.
\end{equation}

For the $m$-node-$\bar{m}$-edge undirected graph $\mathbb{G}$, one defines the oriented incidence matrix of $\mathbb{G}$ denoted by $\boldsymbol{H} \in \mathbb{R}^{\bar{m} \times m}$ such that its entry at the $k$-th row and the $j$-th column is 1 if edge $k$ is an incoming edge to node $j$; -1 if edge $k$ is an outgoing edge from node $j$; and 0 elsewhere.
Note that for undirected graphs, the direction for each edge could be arbitrary, as long as it is consistent with each $\boldsymbol{b}_{ij}$, i.e., satisfying Assumption \ref{assum:consistency}.
Based on \eqref{eq:edge_agreement_each_after}, the definitions of $H$ and $\bar{\boldsymbol{P}}$, and
\begin{equation} \label{eq:H_bar_define}
\mathbb{R}^{\bar{m}n \times mn} \ni \bar{\boldsymbol{H}} \triangleq \boldsymbol{H} \otimes \boldsymbol{I}_n ,
\end{equation} Let $\boldsymbol{x} \triangleq \{ \boldsymbol{x}_1, \cdots, \boldsymbol{x}_m  \} \in \mathbb{R}^{mn}$. Then by the definition of $\bar{\boldsymbol{H}}$ and $\bar{\boldsymbol{P}}$,
the following lemma holds:
\begin{lemma} \cite[Lemma~1]{lu2024distributed} \label{lemma:edge_agreement}
The edge agreements \eqref{problem_interest:edge} are equivalent to the following equation:
\begin{equation} \label{eq:edge_agreement_each_after1}
\bar{\boldsymbol{P}}(\bar{\boldsymbol{H}} \boldsymbol{x}- \bar{\boldsymbol{b}}) = \boldsymbol{0},
\end{equation}
where $\bar{\boldsymbol{P}}$, $\bar{\boldsymbol{b}}$, and $\bar{\boldsymbol{H}}$ are as defined in \eqref{eq:diag_mat_define}, \eqref{eq:edge_b_define}, and \eqref{eq:H_bar_define}, respectively.
\end{lemma}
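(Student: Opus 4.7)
The plan is to prove the lemma by showing that the stacked equation $\bar{\boldsymbol{P}}(\bar{\boldsymbol{H}} \boldsymbol{x} - \bar{\boldsymbol{b}}) = \boldsymbol{0}$ is nothing more than a concatenation of the per-edge equivalents \eqref{eq:edge_agreement_each_after}, which the excerpt has already derived from \eqref{problem_interest:edge}. So the task reduces to unpacking the definitions of $\bar{\boldsymbol{P}}$, $\bar{\boldsymbol{H}}$, and $\bar{\boldsymbol{b}}$ block-by-block.

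First, I would reconfirm the per-edge equivalence between $\boldsymbol{A}_{ij}(\boldsymbol{x}_i-\boldsymbol{x}_j)=\boldsymbol{b}_{ij}$ and $\boldsymbol{P}_{ij}(\boldsymbol{x}_i-\boldsymbol{x}_j-\bar{\boldsymbol{b}}_{ij})=\boldsymbol{0}$. The forward direction follows from left-multiplying $\boldsymbol{A}_{ij}(\boldsymbol{x}_i-\boldsymbol{x}_j)=\boldsymbol{b}_{ij}$ by $\boldsymbol{A}_{ij}^{\top}(\boldsymbol{A}_{ij}\boldsymbol{A}_{ij}^{\top})^{-1}$, which yields $\boldsymbol{P}_{ij}(\boldsymbol{x}_i-\boldsymbol{x}_j)=\bar{\boldsymbol{b}}_{ij}$, and the reverse direction follows from left-multiplying by $\boldsymbol{A}_{ij}$ and using $\boldsymbol{A}_{ij}\bar{\boldsymbol{b}}_{ij}=\boldsymbol{b}_{ij}$.

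Next, I would enumerate the edges as $(i_1,j_1),\dots,(i_{\bar m},j_{\bar m})$ according to the orientation of $\boldsymbol{H}$, so that for the $l$-th edge the oriented incidence matrix contributes $+1$ to column $i_l$ and $-1$ to column $j_l$. By the Kronecker identity $\bar{\boldsymbol{H}}=\boldsymbol{H}\otimes\boldsymbol{I}_n$, the $l$-th block row of $\bar{\boldsymbol{H}}\boldsymbol{x}$ equals $\boldsymbol{x}_{i_l}-\boldsymbol{x}_{j_l}$. Therefore the $l$-th block row of $\bar{\boldsymbol{H}}\boldsymbol{x}-\bar{\boldsymbol{b}}$ is exactly $\boldsymbol{x}_{i_l}-\boldsymbol{x}_{j_l}-\bar{\boldsymbol{b}}_{i_l,j_l}$, and since $\bar{\boldsymbol{P}}$ is block-diagonal with $\boldsymbol{P}_{i_l,j_l}$ in the $l$-th block, the $l$-th block row of $\bar{\boldsymbol{P}}(\bar{\boldsymbol{H}}\boldsymbol{x}-\bar{\boldsymbol{b}})$ is precisely $\boldsymbol{P}_{i_l,j_l}(\boldsymbol{x}_{i_l}-\boldsymbol{x}_{j_l}-\bar{\boldsymbol{b}}_{i_l,j_l})$. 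The stacked equation vanishes if and only if every block row vanishes, which by the per-edge equivalence holds if and only if \eqref{problem_interest:edge} holds for every $(i,j)\in\mathcal{E}$.

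I do not expect a major obstacle; the main subtlety is orientation-invariance. Flipping the orientation of edge $l$ negates the corresponding block of $\bar{\boldsymbol{H}}\boldsymbol{x}$, replacing $\boldsymbol{x}_{i_l}-\boldsymbol{x}_{j_l}$ by $\boldsymbol{x}_{j_l}-\boldsymbol{x}_{i_l}$, and by Assumption \ref{assum:consistency} the constant $\bar{\boldsymbol{b}}_{i_l,j_l}$ likewise flips sign to $\bar{\boldsymbol{b}}_{j_l,i_l}$, while $\boldsymbol{P}_{i_l,j_l}=\boldsymbol{P}_{j_l,i_l}$ is unchanged; so the $l$-th block row is only negated overall and the zero condition is orientation-free. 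This makes the construction of $\bar{\boldsymbol{P}}$, $\bar{\boldsymbol{H}}$, $\bar{\boldsymbol{b}}$ well-defined and completes the equivalence.
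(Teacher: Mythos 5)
Your proposal is correct and follows essentially the same route the paper sketches: establish the per-edge equivalence \eqref{eq:edge_agreement_each_after} via the projection identities (noting $\boldsymbol{A}_{ij}\bar{\boldsymbol{b}}_{ij}=\boldsymbol{b}_{ij}$ and, implicitly, $\boldsymbol{P}_{ij}\bar{\boldsymbol{b}}_{ij}=\bar{\boldsymbol{b}}_{ij}$), then observe that $\bar{\boldsymbol{P}}(\bar{\boldsymbol{H}}\boldsymbol{x}-\bar{\boldsymbol{b}})=\boldsymbol{0}$ is just the block-row stacking of these per-edge conditions. Your orientation-invariance remark matches the paper's own comment that the edge directions in $\boldsymbol{H}$ may be chosen arbitrarily provided they are consistent with Assumption~\ref{assum:consistency}.
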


The following assumption about $\bar{\boldsymbol{H}}$ and $\bar{\boldsymbol{P}}$ is adopted.
\begin{assumption} \label{assump:well_configured}
The graph $\mathbb{G}$ is connected and well-configured for edge agreements, i.e.
\begin{equation} \label{eq:well_configured}
\emph{ker } \bar{\boldsymbol{H}}^{\top} \cap \emph{image } \bar{\boldsymbol{P}} = \boldsymbol{0}.
\end{equation}
\end{assumption}

By the definition of kernel and image, $\text{ker } \bar{\boldsymbol{H}}^{\top} \triangleq \{ \bar{\boldsymbol{x}} \in \mathbb{R}^{\bar{m}n} \  \vert  \ \bar{\boldsymbol{H}}^{\top} \bar{\boldsymbol{x}} = \boldsymbol{0} \}$ and $\text{image } \bar{\boldsymbol{P}} \triangleq \{ \boldsymbol{y} \in \mathbb{R}^{\bar{m}n} \  \vert  \  \boldsymbol{y} = \bar{\boldsymbol{P}} \bar{\boldsymbol{x}}, \ \bar{\boldsymbol{x}} \in \mathbb{R}^{\bar{m}n} \}$. Since $\bar{\boldsymbol{P}}$ is a diagonal matrix of projections, $\text{image } \bar{\boldsymbol{P}} = \mathbb{R}^{\bar{m}n}$. Then $\emph{ker } \bar{\boldsymbol{H}}^{\top} \cap \emph{image } \bar{\boldsymbol{P}} = \boldsymbol{0}$ indicates that
$\bar{\boldsymbol{H}}^{\top} \bar{\boldsymbol{P}} \bar{\boldsymbol{x}} = \boldsymbol{0} \Rightarrow \bar{\boldsymbol{P}} \bar{\boldsymbol{x}} = \boldsymbol{0}$, which further implies that 
$\bar{\boldsymbol{H}}^{\top}\bar{\boldsymbol{P}}(\bar{\boldsymbol{H}} \boldsymbol{x} - \bar{\boldsymbol{b}}) = \boldsymbol{0} \Rightarrow \bar{\boldsymbol{P}}(\bar{\boldsymbol{H}} \boldsymbol{x} - \bar{\boldsymbol{b}}) = \boldsymbol{0}$.

Given an arbitrary closed and convex set $\mathcal{X} \subset \mathbb{R}^n$, define an indicator function of a convex set, $\mathcal{I}_{\mathcal{X}}(\cdot): \mathbb{R}^n \mapsto \{0\}\cup\{\infty\} \subset \mathbb{R}\cup\{\infty\}$, as follows:
\begin{equation} \label{eq;cvx_set_indicator_define}
\mathcal{I}_{\mathcal{X}}(\boldsymbol{x}) = 
\begin{cases}
0, & \boldsymbol{x} \in \mathcal{X} \\
\infty, & \boldsymbol{x} \notin \mathcal{X}
\end{cases}
\end{equation}
$\mathcal{I}_{\mathcal{X}}(\boldsymbol{x})$ can be proved, by definition, closed, proper, and (not strictly) convex.

\section{Algorithm and Analysis} \label{sec:main_result}

This section presents theoretical results for the distributed optimization problem under edge agreements.
First, a discrete-time distributed alternating direction method of multipliers (ADMM) is proposed to solve the problem of interest, where the decision variables are constrained by local convex sets.
% The proposed method is summarized as an algorithm.
Second, a main theorem is provided to establish the convergence of the proposed algorithm, supported by a theoretical analysis.
Third, a numerical simulation is performed to validate the proposed algorithm.

\subsection{Proposed Distributed Algorithm} \label{subsec:proposed_algo}

First, the following lemma reformulates the problem \eqref{problem_interest}.
\begin{lemma} \label{lemma:convert_problem}
Let Assumption \ref{assum:existence}, \ref{assum:consistency}, and \ref{assump:well_configured} hold. The problem of interest \eqref{problem_interest} is equivalent to the following optimization:
\begin{mini!}|s|
{{\{\boldsymbol{x}_i,\boldsymbol{z}_i\}}_{i=1}^m}{ \boldsymbol{\ell}(\boldsymbol{x},\boldsymbol{z}) := \sum_{i=1}^{m} \boldsymbol{f}_i(\boldsymbol{x}_i) + \sum_{i=1}^{m} \mathcal{I}_{\mathcal{X}_i }(\boldsymbol{z}_i) \label{problem_interest_revised_2:obj}}
{\label{problem_interest_revised_2}}{}
\addConstraint{ \boldsymbol{x} = \boldsymbol{z} \label{problem_interest_revised_2:equality}}
\addConstraint{ \bar{\boldsymbol{H}}^{\top}\bar{\boldsymbol{P}}(\bar{\boldsymbol{H}} \boldsymbol{x} - \bar{\boldsymbol{b}}) = \boldsymbol{0}_{mn} \label{problem_interest_revised_2:edge}}
\end{mini!}
where $\boldsymbol{z}_i \in \mathbb{R}^n$ is a new decision variable for agent $i$ and $\boldsymbol{z} \triangleq \emph{col} \{ \boldsymbol{z}_1, \cdots, \boldsymbol{z}_m \} \in \mathbb{R}^{mn}$.
\end{lemma}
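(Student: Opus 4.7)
The plan is to establish equivalence between problems \eqref{problem_interest} and \eqref{problem_interest_revised_2} by matching feasible sets and objective values, treating the two reformulations separately: (i) replacing the set membership constraint $\boldsymbol{x}_i \in \mathcal{X}_i$ with a splitting variable $\boldsymbol{z}_i$ and an indicator, and (ii) converting the edge agreement from the form in Lemma \ref{lemma:edge_agreement} to the quadratic-like form \eqref{problem_interest_revised_2:edge}.

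For part (i), I would observe that on feasible points of \eqref{problem_interest_revised_2}, the constraint $\boldsymbol{x}=\boldsymbol{z}$ together with finiteness of the objective forces $\mathcal{I}_{\mathcal{X}_i}(\boldsymbol{z}_i)=0$, hence $\boldsymbol{z}_i \in \mathcal{X}_i$ and consequently $\boldsymbol{x}_i \in \mathcal{X}_i$; moreover the indicator contribution to the objective vanishes, so the remaining objective reduces to $\sum_i \boldsymbol{f}_i(\boldsymbol{x}_i)$. Conversely, given any feasible $\boldsymbol{x}$ for \eqref{problem_interest}, the point $(\boldsymbol{x},\boldsymbol{z})$ with $\boldsymbol{z}:=\boldsymbol{x}$ is feasible for \eqref{problem_interest_revised_2} with the same objective value. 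This yields a bijection between the feasible sets that preserves objective values.

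For part (ii), I would invoke Lemma \ref{lemma:edge_agreement} to rewrite \eqref{problem_interest:edge} as $\bar{\boldsymbol{P}}(\bar{\boldsymbol{H}} \boldsymbol{x} - \bar{\boldsymbol{b}}) = \boldsymbol{0}$. The forward implication from this to \eqref{problem_interest_revised_2:edge} is immediate by left-multiplying by $\bar{\boldsymbol{H}}^{\top}$. The reverse implication is exactly where Assumption \ref{assump:well_configured} is used: setting $\bar{\boldsymbol{y}} := \bar{\boldsymbol{P}}(\bar{\boldsymbol{H}} \boldsymbol{x}-\bar{\boldsymbol{b}})$, we have $\bar{\boldsymbol{y}} \in \emph{image } \bar{\boldsymbol{P}}$ by construction and $\bar{\boldsymbol{y}} \in \emph{ker } \bar{\boldsymbol{H}}^{\top}$ by hypothesis, so \eqref{eq:well_configured} forces $\bar{\boldsymbol{y}}=\boldsymbol{0}$. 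This argument is essentially spelled out already in the paragraph following Assumption \ref{assump:well_configured}, so I would simply cite it.

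Combining (i) and (ii), the two problems share the same feasible sets (after projecting out $\boldsymbol{z}$) and the same objective values on these sets, so they are equivalent in the sense that $(\boldsymbol{x}^{\star},\boldsymbol{x}^{\star})$ is optimal for \eqref{problem_interest_revised_2} whenever $\boldsymbol{x}^{\star}$ is optimal for \eqref{problem_interest}, and vice versa. There is no real obstacle here; the only subtle step is the reverse direction of (ii), which is precisely why Assumption \ref{assump:well_configured} was stated, and Assumption \ref{assum:existence} is only needed to guarantee that the optimization is well-posed (a nonempty feasible set is inherited by \eqref{problem_interest_revised_2}).
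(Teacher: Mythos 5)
Your proposal is correct and follows essentially the same route as the paper's own (much terser) proof: the indicator-plus-splitting argument for the convex set constraints, and the equivalence $\bar{\boldsymbol{P}}(\bar{\boldsymbol{H}} \boldsymbol{x} - \bar{\boldsymbol{b}}) = \boldsymbol{0} \Leftrightarrow \bar{\boldsymbol{H}}^{\top}\bar{\boldsymbol{P}}(\bar{\boldsymbol{H}} \boldsymbol{x} - \bar{\boldsymbol{b}}) = \boldsymbol{0}$ via Lemma~\ref{lemma:edge_agreement} and Assumption~\ref{assump:well_configured}. You simply spell out the feasible-set bijection and the kernel/image argument in more detail than the paper does.
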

\begin{proof}
By Lemma \ref{lemma:edge_agreement} and Assumption \ref{assump:well_configured}, $\bar{\boldsymbol{P}}(\bar{\boldsymbol{H}} \boldsymbol{x} - \bar{\boldsymbol{b}}) = \boldsymbol{0}_{\bar{m}n} \Leftrightarrow \bar{\boldsymbol{H}}^{\top} \bar{\boldsymbol{P}}(\bar{\boldsymbol{H}} \boldsymbol{x} - \bar{\boldsymbol{b}}) = \boldsymbol{0}_{mn}$, thus  \eqref{problem_interest} and \eqref{problem_interest_revised_2} are equivalent to each other.
Together with the indicator function \eqref{eq;cvx_set_indicator_define} and the constraint $\boldsymbol{x}=\boldsymbol{z}$, \eqref{problem_interest_revised_2} is equivalent to \eqref{problem_interest}.
\end{proof}
Note that $\bar{\boldsymbol{P}}(\bar{\boldsymbol{H}} \boldsymbol{x} - \bar{\boldsymbol{b}}) = \boldsymbol{0}_{\bar{m}n}$ is the edge-wise constraint, where as $\bar{\boldsymbol{H}}^{\top} \bar{\boldsymbol{P}}(\bar{\boldsymbol{H}} \boldsymbol{x} - \bar{\boldsymbol{b}}) = \boldsymbol{0}_{mn}$ is the agent-wise constraint. These two are equivalent to each other if Assumption~\ref{assump:well_configured} hold.

By introducing the new variable $\boldsymbol{z}_i$ and the projection $\mathcal{I}_{\mathcal{X}_i}$, the convex set constraint becomes local to each $\boldsymbol{z}_i$. Thus, both the updates on $\boldsymbol{x}$ and $\boldsymbol{z}$ can be distributed locally for each agent $i$.
Without loss of generality (WLOG), the constraints in \eqref{problem_interest_revised_2} can be written as the following compact form:
\begin{equation} \label{eq:constraints_compact}
\boldsymbol{C} \boldsymbol{x} + \boldsymbol{D} \boldsymbol{z} = \boldsymbol{E},
\end{equation}
where $\boldsymbol{C} \triangleq \matt{\boldsymbol{I}_{mn} \\ \bar{\boldsymbol{H}}^{\top}\bar{\boldsymbol{P}}\bar{\boldsymbol{H}}} \in \mathbb{R}^{2mn \times mn}$, $\boldsymbol{D} \triangleq \matt{ -\boldsymbol{I}_{mn} \\ \boldsymbol{0}_{mn \times mn}} \in \mathbb{R}^{2mn \times mn}$, and $\boldsymbol{E} \triangleq \matt{\boldsymbol{0}_{mn} \\ \bar{\boldsymbol{H}}^{\top}\bar{\boldsymbol{P}}\bar{\boldsymbol{b}}} \in \mathbb{R}^{2mn}$.
% $\bar{\boldsymbol{H}}^{\top}\bar{\boldsymbol{P}}\bar{\boldsymbol{H}} \in \mathbb{R}^{mn \times mn}$, $\bar{\boldsymbol{H}}^{\top}\bar{\boldsymbol{P}}\bar{\boldsymbol{b}} \in \mathbb{R}^{mn}$, $\boldsymbol{C} \in \mathbb{R}^{2mn \times mn}$, $\boldsymbol{D} \in \mathbb{R}^{2mn \times mn}$, $\boldsymbol{E} \in \mathbb{R}^{2mn}$.
Denote the constraint residual at iteration $k$ as
\begin{equation} \label{eq:constraint_residual}
\boldsymbol{r}_k \triangleq \boldsymbol{C} \boldsymbol{x}_k + \boldsymbol{D} \boldsymbol{z}_k - \boldsymbol{E} \in \mathbb{R}^{2mn}.
\end{equation}
This paper only uses the notation of \eqref{eq:constraints_compact} to simplify the notations in proofs.

The augmented Lagrangian $L_{\rho}(\boldsymbol{x}, \boldsymbol{z}, \boldsymbol{y})$ of Problem \eqref{problem_interest_revised_2} is 
\begin{equation} \label{eq:augmented_lagrangian_z}
\begin{split}
&L_{\rho}(\boldsymbol{x}, \boldsymbol{z}, \boldsymbol{y}) \triangleq \sum_{i=1}^{m} \boldsymbol{f}_i(\boldsymbol{x}_i) + \sum_{i=1}^{m} \mathcal{I}_{\mathcal{X}_i}(\boldsymbol{z}_i) + \boldsymbol{\lambda}^{\top}(\boldsymbol{x}-\boldsymbol{z}) + \\
& \frac{\rho}{2} \norm{\boldsymbol{x}-\boldsymbol{z}}^2 + \boldsymbol{\mu}^{\top} \bar{\boldsymbol{H}}^{\top}\bar{\boldsymbol{P}}(\bar{\boldsymbol{H}} \boldsymbol{x} - \bar{\boldsymbol{b}}) + \frac{\rho}{2} \norm{\bar{\boldsymbol{P}}(\bar{\boldsymbol{H}} \boldsymbol{x} - \bar{\boldsymbol{b}})}^2,
\end{split}
\end{equation}
where $\rho > 0$ is the penalty parameter, $\boldsymbol{y} \triangleq \text{col} \{ \boldsymbol{\lambda}, \boldsymbol{\mu} \} \in \mathbb{R}^{2mn}$ ($\boldsymbol{\lambda}, \boldsymbol{\mu} \in \mathbb{R}^{mn}$) is the Lagrangian multiplier associated with the constraints \eqref{problem_interest_revised_2:equality} and \eqref{problem_interest_revised_2:edge}.
Note that $\bar{\boldsymbol{H}}\boldsymbol{\mu} \in \mathbb{R}^{\bar{m}n}$ is the edge-wise multiplier associated with $\bar{\boldsymbol{P}}(\bar{\boldsymbol{H}} \boldsymbol{x} - \bar{\boldsymbol{b}}) \in \mathbb{R}^{\bar{m}n}$, whereas $\boldsymbol{\mu} \in \mathbb{R}^{mn}$ is the agent-wise multiplier associated with $\bar{\boldsymbol{H}}^{\top}\bar{\boldsymbol{P}}(\bar{\boldsymbol{H}} \boldsymbol{x} - \bar{\boldsymbol{b}}) \in \mathbb{R}^{mn}$.
With Assumption~\ref{assump:well_configured}, these two constraints are equivalent to each other.
% To decompose  $(\bar{\boldsymbol{H}}\boldsymbol{\mu})^\top \bar{\boldsymbol{P}}(\bar{\boldsymbol{H}} \boldsymbol{x} - \bar{\boldsymbol{b}})$ by $m$ agents, with Assumption~\ref{assump:well_configured}, let $\boldsymbol{\mu} \in \mathbb{R}^{mn}$ be the multiplier associated with the term $\bar{\boldsymbol{H}}^{\top}\bar{\boldsymbol{P}}(\bar{\boldsymbol{H}} \boldsymbol{x} - \bar{\boldsymbol{b}}) \in \mathbb{R}^{mn}$.
A common assumption on saddle points is adopted as follows.
\begin{assumption} \label{assump:saddle_2}
$\boldsymbol{f}=\sum_{i=1}^m \boldsymbol{f}_i$ is closed, proper, and convex. The unaugmented Lagrangian $L_0(\boldsymbol{x}, \boldsymbol{z}, \boldsymbol{y})$ has a saddle point.
\end{assumption}
By Assumption \ref{assump:saddle_2}, there exists $\boldsymbol{x}^*$, $\boldsymbol{z}^*$ and $\boldsymbol{y}^*$, not necessarily unique, where $L_0(\boldsymbol{x}^*, \boldsymbol{z}^*, \boldsymbol{y}) \leq L_0(\boldsymbol{x}^*, \boldsymbol{z}^*, \boldsymbol{y}^*) \leq L_0(\boldsymbol{x}, \boldsymbol{z}, \boldsymbol{y}^*)$ holds for all $\boldsymbol{x}$, $\boldsymbol{z}$ and $\boldsymbol{y}$.

Given the augmented Lagrangian $L_{\rho}(\boldsymbol{x}, \boldsymbol{z}, \boldsymbol{y})$ in \eqref{eq:augmented_lagrangian_z}, a distributed algorithm based on the alternating direction method of multipliers (ADMM) is proposed as follows:
\begin{subequations} \label{eq:distributed_update_2}
\begin{align}
\begin{split} \label{eq:distributed_update_2:x}
&\boldsymbol{x}_{i,k+1} = \arg\min_{\boldsymbol{x}_i} \{ \boldsymbol{f}_i(\boldsymbol{x}_i) + \boldsymbol{\lambda}_{i,k}^{\top}\boldsymbol{x}_i + \frac{\rho}{2} \norm{\boldsymbol{x}_i-\boldsymbol{z}_{i,k}}^2 \\
&+\boldsymbol{\mu}_{i,k}^{\top}\textstyle\sum_{j \in \mathcal{N}_i} \boldsymbol{P}_{ij}(\boldsymbol{x}_i - \boldsymbol{x}_{j,k} - \bar{\boldsymbol{b}}_{ij}) \\
&+\frac{\rho}{2}\textstyle\sum_{j \in \mathcal{N}_i} \norm{\boldsymbol{P}_{ij}(\boldsymbol{x}_i - \boldsymbol{x}_{j,k} - \bar{\boldsymbol{b}}_{ij})}^2 \},
\end{split} \\
& \boldsymbol{z}_{i,k+1} = \arg\min_{\boldsymbol{z}_i \in \mathcal{X}_i} \norm{\boldsymbol{z}_i - (\boldsymbol{x}_{i,k+1}+\boldsymbol{\lambda}_{i,k}/\rho)}^2, \label{eq:distributed_update_2:z}\\
& \boldsymbol{\lambda}_{i,k+1} = \boldsymbol{\lambda}_{i,k} + \boldsymbol{x}_{i,k+1} - \boldsymbol{z}_{i,k+1}, \label{eq:distributed_update_2:lambda}\\
& \boldsymbol{\mu}_{i,k+1} = \boldsymbol{\mu}_{i,k} + \textstyle\sum_{j \in \mathcal{N}_i} \boldsymbol{P}_{ij}(\boldsymbol{x}_{i,k+1} - \boldsymbol{x}_{j,k+1} - \bar{\boldsymbol{b}}_{ij}), \label{eq:distributed_update_2:mu}
\end{align}
\end{subequations}
where $\boldsymbol{\lambda}_{i,k} \in \mathbb{R}^{n}$ and $\boldsymbol{\mu}_{i,k} \in \mathbb{R}^{n}$ are the Lagrangian multipliers associated with agent $i$ at iteration $k$, and $\boldsymbol{\lambda}_k \coloneq \text{col}\{ \boldsymbol{\lambda}_{1,k}, \cdots, \boldsymbol{\lambda}_{m,k} \}$, $\boldsymbol{\mu}_k \coloneq \text{col}\{ \boldsymbol{\mu}_{1,k}, \cdots, \boldsymbol{\mu}_{m,k} \}$.
The following theorem guarantees the convergence of the proposed rule \eqref{eq:distributed_update_2}. And its theoretical analysis is provided in Section~\ref{subsec:analysis}.
The proposed algorithm is summarized in Algorithm~\ref{alg:algo_proposed}, where the content within \textbf{parfor} is executed by each agent parallelly.

% If $\boldsymbol{f}$ is strictly convex, and $\boldsymbol{f}_i(\boldsymbol{x}_i)$ is continuously differentiable and convex, the update rule of $\boldsymbol{x}_{i,k+1}$ in \eqref{eq:distributed_update_2}
% can be written as:

\begin{theorem} \label{theorem:update_2}
Let Assumption \ref{assum:existence}, \ref{assum:consistency}, \ref{assump:well_configured}, and \ref{assump:saddle_2} hold.
Define the optimal objective $\boldsymbol{\ell}^*$ as
\begin{equation} \label{eq:optimal_obj_define}
\boldsymbol{\ell}^* \triangleq \inf \{  \boldsymbol{\ell}(\boldsymbol{x},\boldsymbol{z}) \  \vert  \ \boldsymbol{z}_i \in \mathcal{X}_i, \boldsymbol{x}=\boldsymbol{z},\eqref{problem_interest:edge} \}.
\end{equation}
For all finite initial $(\boldsymbol{x}_{i,0}, \boldsymbol{z}_{i,0}, \boldsymbol{y}_{i,0})$, the proposed distributed update rule \eqref{eq:distributed_update_2} drives each $\boldsymbol{x}_{i,k}$, $\boldsymbol{z}_{i,k}$, and $\boldsymbol{y}_{i,k}$ such that:
\begin{enumerate}
\item $\boldsymbol{x}_{i,k} - \boldsymbol{z}_{i,k} \to \boldsymbol{0}$ as $k \to \infty$, $\forall i \in \mathcal{V}$;
\item $\boldsymbol{A}_{ij}(\boldsymbol{x}_{i,k} - \boldsymbol{x}_{j,k})-\boldsymbol{b}_{ij} \to \boldsymbol{0}$ as $k \to \infty$, $\forall (i,j) \in \mathcal{E}$;
\item $\boldsymbol{\ell}(\boldsymbol{x}_k,\boldsymbol{z}_k) \to \boldsymbol{\ell}^*$ as $k \to \infty$;
\item $\boldsymbol{y}_k$ converges to an optimal solution $\boldsymbol{y}^*$ of the dual problem; $(\boldsymbol{x}_k,\boldsymbol{z}_k)$ converges to an optimal solution $(\boldsymbol{x}^*,\boldsymbol{z}^*)$ of Problem \eqref{problem_interest_revised_2}, where $\boldsymbol{x}^*$ is an optimal solution of Problem \eqref{problem_interest}.
\end{enumerate}
% In other words, the proposed rule \eqref{eq:distributed_update_2} drives each $\boldsymbol{x}_i$ to solve the problem of interest \eqref{problem_interest} as $k \to \infty$.
\end{theorem}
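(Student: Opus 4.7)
My plan is to recast the distributed update \eqref{eq:distributed_update_2} as a two-block ADMM applied to the reformulated problem \eqref{problem_interest_revised_2} with the compact constraint \eqref{eq:constraints_compact}, and then to invoke the classical ADMM convergence analysis (Boyd et al., 2011, \S 3.2). Taking $f(\boldsymbol{x}) := \sum_i \boldsymbol{f}_i(\boldsymbol{x}_i)$ and $g(\boldsymbol{z}) := \sum_i \mathcal{I}_{\mathcal{X}_i}(\boldsymbol{z}_i)$---both closed, proper, and convex---and the single linear coupling $\boldsymbol{C}\boldsymbol{x} + \boldsymbol{D}\boldsymbol{z} = \boldsymbol{E}$, the template ADMM sequentially minimizes $L_\rho$ in $\boldsymbol{x}$ and in $\boldsymbol{z}$ and then performs dual ascent on $\boldsymbol{y}$. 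Under Assumption \ref{assump:saddle_2} this template delivers (i) primal residual $\boldsymbol{r}_k \to \boldsymbol{0}$, (ii) dual convergence $\boldsymbol{y}_k \to \boldsymbol{y}^*$, and (iii) objective convergence $\boldsymbol{\ell}(\boldsymbol{x}_k, \boldsymbol{z}_k) \to \boldsymbol{\ell}^*$; all four claims of the theorem should follow from these three facts.

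The first step is to verify agent-wise correspondence between \eqref{eq:distributed_update_2} and the template. The $\boldsymbol{z}$-update \eqref{eq:distributed_update_2:z} is immediate: $g$ and $\tfrac{\rho}{2}\|\boldsymbol{x}-\boldsymbol{z}\|^2$ both split across $i$, and $\boldsymbol{z}$ enters the constraint only through $\boldsymbol{x}=\boldsymbol{z}$. The dual updates \eqref{eq:distributed_update_2:lambda}--\eqref{eq:distributed_update_2:mu} follow from the block structure $\boldsymbol{y} = \text{col}\{\boldsymbol{\lambda}, \boldsymbol{\mu}\}$ together with the incidence identity $[\bar{\boldsymbol{H}}^\top \bar{\boldsymbol{P}}(\bar{\boldsymbol{H}}\boldsymbol{x} - \bar{\boldsymbol{b}})]_i = \sum_{j \in \mathcal{N}_i} \boldsymbol{P}_{ij}(\boldsymbol{x}_i - \boldsymbol{x}_j - \bar{\boldsymbol{b}}_{ij})$, obtained from the structure of $\bar{\boldsymbol{H}}$ and Assumption \ref{assum:consistency}. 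The $\boldsymbol{x}$-update \eqref{eq:distributed_update_2:x} requires the most care: one expands $L_\rho$ using the same identity and $\|\bar{\boldsymbol{P}}(\bar{\boldsymbol{H}}\boldsymbol{x} - \bar{\boldsymbol{b}})\|^2 = \sum_{(i,j)\in\mathcal{E}} \|\boldsymbol{P}_{ij}(\boldsymbol{x}_i - \boldsymbol{x}_j - \bar{\boldsymbol{b}}_{ij})\|^2$, and checks that the first-order condition for agent $i$ in \eqref{eq:distributed_update_2:x} matches the $i$-th block of $\nabla_{\boldsymbol{x}} L_\rho = \boldsymbol{0}$.

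Once the equivalence is in hand, the four claims fall out. Claim~1 is the first block of $\boldsymbol{r}_k \to \boldsymbol{0}$. For claim~2, the second block yields $\bar{\boldsymbol{H}}^\top \bar{\boldsymbol{P}}(\bar{\boldsymbol{H}}\boldsymbol{x}_k - \bar{\boldsymbol{b}}) \to \boldsymbol{0}$; Assumption \ref{assump:well_configured} upgrades this to $\bar{\boldsymbol{P}}(\bar{\boldsymbol{H}}\boldsymbol{x}_k - \bar{\boldsymbol{b}}) \to \boldsymbol{0}$, i.e., $\boldsymbol{P}_{ij}(\boldsymbol{x}_{i,k} - \boldsymbol{x}_{j,k} - \bar{\boldsymbol{b}}_{ij}) \to \boldsymbol{0}$ for each edge; left-multiplying by $\boldsymbol{A}_{ij}$ and using $\boldsymbol{A}_{ij}\boldsymbol{P}_{ij} = \boldsymbol{A}_{ij}$ together with $\boldsymbol{A}_{ij}\bar{\boldsymbol{b}}_{ij} = \boldsymbol{b}_{ij}$ gives claim~2. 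Claim~3 is the ADMM objective-value convergence, and claim~4 combines iterate convergence of two-block ADMM with Lemma \ref{lemma:convert_problem}. The main obstacle I anticipate is the $\boldsymbol{x}$-update verification: \eqref{eq:distributed_update_2:x} uses only the local multiplier $\boldsymbol{\mu}_{i,k}$ rather than a neighbor combination such as $\boldsymbol{\mu}_{i,k} - \boldsymbol{\mu}_{j,k}$, and the augmented Lagrangian \eqref{eq:augmented_lagrangian_z} penalizes the edge-wise residual $\bar{\boldsymbol{P}}(\bar{\boldsymbol{H}}\boldsymbol{x} - \bar{\boldsymbol{b}})$ rather than the agent-wise residual $\bar{\boldsymbol{H}}^\top\bar{\boldsymbol{P}}(\bar{\boldsymbol{H}}\boldsymbol{x} - \bar{\boldsymbol{b}})$ encoding \eqref{problem_interest_revised_2:edge}. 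A careful orientation bookkeeping in $\bar{\boldsymbol{H}}$ combined with the symmetry from Assumption \ref{assum:consistency} should reconcile these forms, and if only a Jacobi-relaxed correspondence holds, the Boyd argument would need adaptation via a custom Lyapunov function built from $\|\boldsymbol{y}_k - \boldsymbol{y}^*\|^2$ and $\|\bar{\boldsymbol{P}}\bar{\boldsymbol{H}}(\boldsymbol{x}_k - \boldsymbol{x}^*)\|^2$ to absorb any cross-agent mismatch.
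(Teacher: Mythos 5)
Your proposal takes essentially the same route as the paper: recast \eqref{eq:distributed_update_2} as two-block ADMM on \eqref{problem_interest_revised_2} in the compact form $\boldsymbol{C}\boldsymbol{x}+\boldsymbol{D}\boldsymbol{z}=\boldsymbol{E}$ and run the standard saddle-point Lyapunov analysis, which the paper reproduces explicitly as Lemmas~\ref{lemma:inequality_1}--\ref{lemma:inequality_3} with $V_k = \frac{1}{\rho}\norm{\boldsymbol{y}_k-\boldsymbol{y}^*}^2+\rho\norm{\boldsymbol{D}(\boldsymbol{z}_k-\boldsymbol{z}^*)}^2$ rather than citing it, and then closes claim~4 via a Cauchy argument plus the invertibility of $\boldsymbol{C}^\top\boldsymbol{C}$ and $\boldsymbol{D}^\top\boldsymbol{D}$ (a step you would need to add, since vanilla two-block ADMM does not by itself give primal iterate convergence) and the KKT/saddle-point identification of the limit. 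The coupling issue you flag in the $\boldsymbol{x}$-update (local $\boldsymbol{\mu}_{i,k}$ only, and the quadratic edge penalty linking $\boldsymbol{x}_i$ to $\boldsymbol{x}_j$, so that the per-agent updates are a Jacobi-style relaxation of the joint $\arg\min_{\boldsymbol{x}} L_\rho$) is a genuine subtlety that the paper also asserts rather than verifies when it declares \eqref{eq:admm_original} the ``equivalent centralized compact form'' of \eqref{eq:distributed_update_2}.
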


\begin{algorithm}
\caption{Proposed Distributed Algorithm}\label{alg:algo_proposed}
\DontPrintSemicolon
\KwIn{$\rho$, $N_{\mathrm{iter}} \in \mathbb{Z}_+$}
$k \gets 0$; Initialize $\boldsymbol{x}_{i,0}, \boldsymbol{z}_{i,0}, \boldsymbol{\lambda}_{i,0}, \boldsymbol{\mu}_{i,0}, \forall i$\;
Each agent $i$ obtains $\boldsymbol{x}_{j,0}$ from neighbors $j \in \mathcal{N}_i$\;
\While {$k < N_{\mathrm{iter}}$} {
$\textbf{par}$\For {\emph{agent} $i = 1 \ to \ m$} {
$\boldsymbol{x}_i(k+1) \gets$ \eqref{eq:distributed_update_2:x}\;
$\boldsymbol{z}_i(k+1) \gets$ \eqref{eq:distributed_update_2:z}\;
$\boldsymbol{\lambda}_i(k+1) \gets$ \eqref{eq:distributed_update_2:lambda}\;
Obtain $\boldsymbol{x}_{j,k+1}$ from neighbors $j \in \mathcal{N}_i$\;
$\boldsymbol{\mu}_i(k+1) \gets$ \eqref{eq:distributed_update_2:mu}\;
}
$k \gets k+1$\;
stop when a prescribed stopping criterion is met\;
}
$\textbf{Return}$ $\{\boldsymbol{z}_{i,k+1}\}_{i=1}^m$ as solution to Problem \eqref{problem_interest}\;
% \KwOut{$\boldsymbol{\beta}_i(k), \ \forall i$}
\end{algorithm}

\subsection{Theoretical Analysis} \label{subsec:analysis}

This subsection provides a theoretical analysis to prove Theorem~\ref{theorem:update_2}.
Given Assumption \ref{assum:existence}, \ref{assum:consistency}, and \ref{assump:well_configured} and Lemma~\ref{lemma:convert_problem}, Theorem~\ref{theorem:update_2} is equivalent to proving the following statements:
\begin{enumerate}[label=(\roman*)]
\item Residual convergence: $\boldsymbol{r}_k \to \boldsymbol{0}$ as $k \to \infty$;
\item Objective convergence: $\boldsymbol{\ell}(\boldsymbol{x}_k, \boldsymbol{z}_k) \to \boldsymbol{\ell}^*$ as $k \to \infty$;
\item Primal convergence: $\boldsymbol{x}_k \to \boldsymbol{x}^*$ and $\boldsymbol{z}_k \to \boldsymbol{z}^*$, as $k \to \infty$;
\item Dual convergence: $\boldsymbol{y}_k \to \boldsymbol{y}^*$ as $k \to \infty$.
\end{enumerate}
This subsection provides a theoretical analysis for both the aforementioned statements and Theorem~\ref{theorem:update_2}. Here, an analysis outline is presented to facilitate reading.
First, the equivalent centralized compact form of the proposed distributed rule \eqref{eq:distributed_update_2} is formulated as \eqref{eq:admm_original}.
Second, Lemma~\ref{lemma:inequality_1}, \ref{lemma:inequality_2}, and \ref{lemma:inequality_3} are presented as necessary to prove statements (i) and (ii).
Third, one can prove statement (iii).
Finally, with Assumption~\ref{assump:saddle_2}, one can prove statement (iv). The overall analysis is summarized in the Proof of Theorem~\ref{theorem:update_2}.

Given the augmented Lagrangian $L_{\rho}(\boldsymbol{x}, \boldsymbol{z}, \boldsymbol{y})$ of Problem \eqref{problem_interest_revised_2}, the alternating direction method of multipliers (ADMM) consists of the following centralized iterations:
\begin{subequations} \label{eq:admm_original}
\begin{align}
\boldsymbol{x}_{k+1} &= \arg\min_{\boldsymbol{x}} L_{\rho}(\boldsymbol{x}, \boldsymbol{z}_k, \boldsymbol{y}_k), \label{eq:admm_original:x} \\
\boldsymbol{z}_{k+1} &= \arg\min_{\{\boldsymbol{z}_i\in \mathcal{X}_i\}} L_{\rho}(\boldsymbol{x}_{k+1}, \boldsymbol{z}, \boldsymbol{y}_k), \label{eq:admm_original:z} \\
\boldsymbol{y}_{k+1} &= \boldsymbol{y}_k + \rho (\boldsymbol{C} \boldsymbol{x}_{k+1} + \boldsymbol{D} \boldsymbol{z}_{k+1} - \boldsymbol{E}). \label{eq:admm_original:multiplier}
\end{align}
\end{subequations}
$L_{\rho}(\boldsymbol{x}, \boldsymbol{z}_k, \boldsymbol{y}_k)$ in the update rule \eqref{eq:admm_original:x} can be simplified as
\begin{equation*}
\begin{split}
&L_{\rho}(\boldsymbol{x}, \boldsymbol{z}_k, \boldsymbol{y}_k) = \boldsymbol{f}(\boldsymbol{x}_k) + \cancelto{0}{{\sum_{i=1}^{m} \mathcal{I}_{\mathcal{X}_i}(\boldsymbol{z}_{i,k})} }+ \boldsymbol{\lambda}_k^\top(\boldsymbol{x}-\boldsymbol{z}_k) + \\
& \frac{\rho}{2} \norm{\boldsymbol{x}-\boldsymbol{z}_{k}}^2 + \boldsymbol{\mu}_k^\top(\bar{\boldsymbol{H}}^{\top}\bar{\boldsymbol{P}}(\bar{\boldsymbol{H}} \boldsymbol{x} - \bar{\boldsymbol{b}})) + \frac{\rho}{2}  \norm{\bar{\boldsymbol{P}}(\bar{\boldsymbol{H}} \boldsymbol{x} - \bar{\boldsymbol{b}})}^2.
\end{split}
\end{equation*}
$L_{\rho}(\boldsymbol{x}_{k+1}, \boldsymbol{z}, \boldsymbol{y}_k)$ in the update rule \eqref{eq:admm_original:z} can be written in the following quadratic form:
\begin{equation*}
\begin{split}
&L_{\rho}(\boldsymbol{x}_{k+1}, \boldsymbol{z}, \boldsymbol{y}_k) = \frac{\rho}{2} \norm{\boldsymbol{x}_{k+1}-\boldsymbol{z}}^2 + \boldsymbol{\lambda}_k^\top(\boldsymbol{x}_{k+1}-\boldsymbol{z}) = \\
& \frac{\rho}{2} \norm{\boldsymbol{z}-(\boldsymbol{x}_{k+1}+\frac{1}{\rho}\boldsymbol{\lambda}_{k})}^2 - \boldsymbol{\lambda}_k^\top (\frac{1}{2\rho}\boldsymbol{\lambda}_{k}+\boldsymbol{x}_{k+1}) + \boldsymbol{\lambda}_k^\top \boldsymbol{x}_{k+1}.
\end{split}
\end{equation*}
Thus, $\arg\min_{\{\boldsymbol{z}_i\in \mathcal{X}_i\}} L_{\rho}(\boldsymbol{x}_{k+1}, \boldsymbol{z}, \boldsymbol{y}_k)$ can be simplified as
\begin{equation*}
\arg\min_{\{\boldsymbol{z}_i\in \mathcal{X}_i\}} \norm{\boldsymbol{z}-(\boldsymbol{x}_{k+1}+\boldsymbol{\lambda}_{k}/\rho)}^2.
\end{equation*}
The update rule \eqref{eq:admm_original:multiplier} can be expanded as follows:
\begin{subequations} \label{eq:admm_2}
\begin{align}
\boldsymbol{\lambda}_{k+1} &= \boldsymbol{\lambda}_k + \boldsymbol{x}_{k+1} - \boldsymbol{z}_{k+1}, \label{eq:admm_2:lambda} \\
\boldsymbol{\mu}_{k+1} &= \boldsymbol{\mu}_k + \bar{\boldsymbol{H}}^{\top}\bar{\boldsymbol{P}}(\bar{\boldsymbol{H}} \boldsymbol{x}_{k+1} - \bar{\boldsymbol{b}}).\label{eq:admm_2:mu}
\end{align}
\end{subequations}
% Therefore, the proposed distributed update rule \eqref{eq:distributed_update_2} is equivalent to its centralized compact form:
% \begin{subequations} \label{eq:admm_centeralized}
% \begin{align}
% \boldsymbol{x}_{k+1} &= \arg\min_{\boldsymbol{x}} L_{\rho}(\boldsymbol{x}, \boldsymbol{z}_k, \boldsymbol{y}_k), \label{eq:admm_centeralized:x} \\
% \boldsymbol{z}_{k+1} &= \arg\min_{\{\boldsymbol{z}_i\in \mathcal{X}_i\}} L_{\rho}(\boldsymbol{x}_{k+1}, \boldsymbol{z}, \boldsymbol{y}_k), \label{eq:admm_centeralized:z} \\
% \boldsymbol{y}_{k+1} &= \boldsymbol{y}_k + \rho (C \boldsymbol{x}_{k+1} + D \boldsymbol{z}_{k+1} - E). \label{eq:admm_centeralized:y}
% \end{align}
% \end{subequations}

% \boldsymbol{\lambda}_{k+1} &= \boldsymbol{\lambda}_k + \boldsymbol{x}_{k+1} - \boldsymbol{z}_{k+1}, \label{eq:admm_centeralized:lambda} \\
% \boldsymbol{\mu}_{k+1} &= \boldsymbol{\mu}_k + \bar{\boldsymbol{H}}^{\top}\bar{\boldsymbol{P}}(\bar{\boldsymbol{H}} \boldsymbol{x}_{k+1} - \bar{\boldsymbol{b}}). \label{eq:admm_centeralized:mu}

\begin{lemma} \label{lemma:inequality_1}
Let Assumption \ref{assum:existence}, \ref{assum:consistency}, \ref{assump:well_configured}, and \ref{assump:saddle_2} hold. Given $\boldsymbol{\ell}^*$ defined in \eqref{eq:optimal_obj_define}, the following inequality holds:
\begin{equation} \label{eq:lemma_inequality_1}
\boldsymbol{\ell}^* - \boldsymbol{\ell}_{k+1} \leq {\boldsymbol{y}^*}^{\top} \boldsymbol{r}_{k+1}.
\end{equation}
\end{lemma}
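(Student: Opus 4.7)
The plan is to invoke the saddle-point property of the unaugmented Lagrangian guaranteed by Assumption~\ref{assump:saddle_2}. First I would write $L_0$ in the compact form suggested by \eqref{eq:constraints_compact}--\eqref{eq:constraint_residual}, namely
\begin{equation*}
L_0(\boldsymbol{x},\boldsymbol{z},\boldsymbol{y}) = \boldsymbol{\ell}(\boldsymbol{x},\boldsymbol{z}) + \boldsymbol{y}^{\top}(\boldsymbol{C}\boldsymbol{x}+\boldsymbol{D}\boldsymbol{z}-\boldsymbol{E}),
\end{equation*}
which, unpacking $\boldsymbol{y}=\text{col}\{\boldsymbol{\lambda},\boldsymbol{\mu}\}$ and the block structure of $\boldsymbol{C},\boldsymbol{D},\boldsymbol{E}$, coincides with the augmented Lagrangian in \eqref{eq:augmented_lagrangian_z} with the two quadratic penalty terms dropped.

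Second, I would evaluate $L_0$ at the saddle point $(\boldsymbol{x}^*,\boldsymbol{z}^*,\boldsymbol{y}^*)$. At this point, primal feasibility $\boldsymbol{C}\boldsymbol{x}^*+\boldsymbol{D}\boldsymbol{z}^*-\boldsymbol{E}=\boldsymbol{0}$ holds and $\boldsymbol{z}_i^*\in\mathcal{X}_i$, so the indicator terms vanish and $L_0(\boldsymbol{x}^*,\boldsymbol{z}^*,\boldsymbol{y}^*)=\boldsymbol{\ell}^*$. Applying the right-hand saddle-point inequality $L_0(\boldsymbol{x}^*,\boldsymbol{z}^*,\boldsymbol{y}^*)\le L_0(\boldsymbol{x},\boldsymbol{z},\boldsymbol{y}^*)$ with the test point $(\boldsymbol{x},\boldsymbol{z})=(\boldsymbol{x}_{k+1},\boldsymbol{z}_{k+1})$ then gives
\begin{equation*}
\boldsymbol{\ell}^* \;\le\; L_0(\boldsymbol{x}_{k+1},\boldsymbol{z}_{k+1},\boldsymbol{y}^*) \;=\; \boldsymbol{\ell}_{k+1} + {\boldsymbol{y}^*}^{\top}\boldsymbol{r}_{k+1},
\end{equation*}
and rearranging yields \eqref{eq:lemma_inequality_1}.

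The only delicate point, and the one I would make explicit, is that the inequality is vacuous if $\boldsymbol{\ell}_{k+1}=+\infty$; that is, I need $\boldsymbol{z}_{i,k+1}\in\mathcal{X}_i$ for every $i$ so that $\sum_i \mathcal{I}_{\mathcal{X}_i}(\boldsymbol{z}_{i,k+1})=0$ and $\boldsymbol{\ell}_{k+1}=\sum_i \boldsymbol{f}_i(\boldsymbol{x}_{i,k+1})$ is finite. This is automatic from the $\boldsymbol{z}$-update \eqref{eq:distributed_update_2:z}, which is precisely the Euclidean projection of $\boldsymbol{x}_{i,k+1}+\boldsymbol{\lambda}_{i,k}/\rho$ onto the closed convex set $\mathcal{X}_i$, so $\boldsymbol{z}_{i,k+1}\in\mathcal{X}_i$ for all $k\ge 0$. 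I do not anticipate a genuine obstacle in this lemma itself: the real work is deferred to Lemmas~\ref{lemma:inequality_2} and the subsequent one, where the descent-type bound on $\boldsymbol{\ell}_{k+1}-\boldsymbol{\ell}^*$ must be combined with this inequality to control the constraint residual and establish monotone decrease of a Lyapunov quantity.
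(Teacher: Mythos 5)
Your proof is correct and follows essentially the same route as the paper's: both evaluate $L_0$ at the saddle point using primal feasibility to obtain $L_0(\boldsymbol{x}^*,\boldsymbol{z}^*,\boldsymbol{y}^*)=\boldsymbol{\ell}^*$, then apply the right-hand saddle-point inequality at $(\boldsymbol{x}_{k+1},\boldsymbol{z}_{k+1})$ and rearrange. Your added observation that $\boldsymbol{z}_{i,k+1}\in\mathcal{X}_i$ (so $\boldsymbol{\ell}_{k+1}$ is finite) is a detail the paper leaves implicit, but it is correct and does not change the argument.
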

\begin{proof}
According to Assumption \ref{assump:saddle_2}, $(\boldsymbol{x}^*, \boldsymbol{z}^*, \boldsymbol{y}^*)$ is a saddle point for $L_0$, thus
\begin{equation} \label{eq:lemma_ineq_L}
L_0(\boldsymbol{x}^*, \boldsymbol{z}^*, \boldsymbol{y}^*) \leq L_0(\boldsymbol{x}_{k+1}, \boldsymbol{z}_{k+1}, \boldsymbol{y}^*).
\end{equation}
Given the definition of $L_0(\cdot)$ in \eqref{eq:augmented_lagrangian_z} and $\boldsymbol{C} \boldsymbol{x}^* + \boldsymbol{D} \boldsymbol{z}^* = \boldsymbol{E}$, expanding $L_0(\boldsymbol{x}^*, \boldsymbol{z}^*, \boldsymbol{y}^*)$  yields
\begin{equation*}
L_0(\boldsymbol{x}^*, \boldsymbol{z}^*, \boldsymbol{y}^*) = \boldsymbol{\ell}^*.
\end{equation*}
With $\boldsymbol{\ell}_{k+1} = \boldsymbol{f}(\boldsymbol{x}_{k+1}) + \sum_{i=1}^m \mathcal{I}_{\mathcal{X}_i}(\boldsymbol{z}_{i,k+1})$, \eqref{eq:lemma_ineq_L} reads
\begin{equation*}
\boldsymbol{\ell}^* \leq \boldsymbol{\ell}_{k+1} + {\boldsymbol{y}^*}^\top \boldsymbol{r}_{k+1} \Rightarrow \boldsymbol{\ell}^* - \boldsymbol{\ell}_{k+1} \leq {\boldsymbol{y}^*}^{\top} \boldsymbol{r}_{k+1}.
\end{equation*}
This completes the proof.
\end{proof}

\begin{lemma} \label{lemma:inequality_2}
Let Assumption \ref{assum:existence}, \ref{assum:consistency}, \ref{assump:well_configured}, and \ref{assump:saddle_2} hold. The following inequality holds:
\begin{equation} \label{eq:lemma_inequality_2}
\begin{split}
\boldsymbol{\ell}_{k+1} - \boldsymbol{\ell}^* \leq &-\boldsymbol{y}_{k+1}^\top \boldsymbol{r}_{k+1} - \rho (\boldsymbol{D}(\boldsymbol{z}_{k+1} - \boldsymbol{z}_k))^\top \cdot \\
&(-\boldsymbol{r}_{k+1} + \boldsymbol{D}(\boldsymbol{z}_{k+1} - \boldsymbol{z}^*)).
\end{split}
\end{equation}
\end{lemma}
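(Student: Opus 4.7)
The plan is to work in the compact centralized form \eqref{eq:admm_original} and derive \eqref{eq:lemma_inequality_2} from the two subproblem optimality conditions, combined via convexity of $\boldsymbol{f}$ and of each $\mathcal{I}_{\mathcal{X}_i}$. Throughout I will carry $\boldsymbol{C}$, $\boldsymbol{D}$, $\boldsymbol{E}$, and the residual $\boldsymbol{r}_{k+1}$ symbolically so the algebra stays short and independent of the edge-agreement details. This mirrors the classical convex ADMM analysis, adapted to the split $\boldsymbol{\ell}(\boldsymbol{x},\boldsymbol{z}) = \boldsymbol{f}(\boldsymbol{x}) + \sum_i \mathcal{I}_{\mathcal{X}_i}(\boldsymbol{z}_i)$.

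First I would write the first-order optimality condition for the $\boldsymbol{x}$-subproblem \eqref{eq:admm_original:x}, namely $\boldsymbol{0} \in \partial \boldsymbol{f}(\boldsymbol{x}_{k+1}) + \boldsymbol{C}^\top \boldsymbol{y}_k + \rho \boldsymbol{C}^\top(\boldsymbol{C}\boldsymbol{x}_{k+1} + \boldsymbol{D}\boldsymbol{z}_k - \boldsymbol{E})$. I add and subtract $\rho \boldsymbol{C}^\top \boldsymbol{D}\boldsymbol{z}_{k+1}$ to bring the full residual $\boldsymbol{r}_{k+1}$ into view, then invoke the multiplier update in the form $\boldsymbol{y}_{k+1} = \boldsymbol{y}_k + \rho \boldsymbol{r}_{k+1}$ (from \eqref{eq:admm_original:multiplier}) to conclude that $-\boldsymbol{C}^\top \boldsymbol{y}_{k+1} + \rho \boldsymbol{C}^\top \boldsymbol{D}(\boldsymbol{z}_{k+1}-\boldsymbol{z}_k) \in \partial \boldsymbol{f}(\boldsymbol{x}_{k+1})$. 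Applying the convex subgradient inequality at the saddle point $\boldsymbol{x}^*$ yields $\boldsymbol{f}(\boldsymbol{x}_{k+1}) - \boldsymbol{f}(\boldsymbol{x}^*) \leq -\boldsymbol{y}_{k+1}^\top \boldsymbol{C}(\boldsymbol{x}_{k+1}-\boldsymbol{x}^*) + \rho (\boldsymbol{D}(\boldsymbol{z}_{k+1}-\boldsymbol{z}_k))^\top \boldsymbol{C}(\boldsymbol{x}_{k+1}-\boldsymbol{x}^*)$.

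Next I would repeat the step for the $\boldsymbol{z}$-subproblem \eqref{eq:admm_original:z}. Because that update already uses $\boldsymbol{x}_{k+1}$, its optimality reads $\boldsymbol{0} \in \partial\bigl(\sum_i \mathcal{I}_{\mathcal{X}_i}\bigr)(\boldsymbol{z}_{k+1}) + \boldsymbol{D}^\top \boldsymbol{y}_k + \rho \boldsymbol{D}^\top \boldsymbol{r}_{k+1}$, which collapses directly to $-\boldsymbol{D}^\top \boldsymbol{y}_{k+1} \in \partial\bigl(\sum_i \mathcal{I}_{\mathcal{X}_i}\bigr)(\boldsymbol{z}_{k+1})$ with no cross term. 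Convexity of the indicator sum at $\boldsymbol{z}^*$ then gives $\sum_i \mathcal{I}_{\mathcal{X}_i}(\boldsymbol{z}_{i,k+1}) - \sum_i \mathcal{I}_{\mathcal{X}_i}(\boldsymbol{z}_i^*) \leq -\boldsymbol{y}_{k+1}^\top \boldsymbol{D}(\boldsymbol{z}_{k+1}-\boldsymbol{z}^*)$. Adding the two inequalities and using primal feasibility at the saddle point, $\boldsymbol{C}\boldsymbol{x}^* + \boldsymbol{D}\boldsymbol{z}^* = \boldsymbol{E}$, one recognizes $\boldsymbol{C}(\boldsymbol{x}_{k+1}-\boldsymbol{x}^*) + \boldsymbol{D}(\boldsymbol{z}_{k+1}-\boldsymbol{z}^*) = \boldsymbol{r}_{k+1}$ in the first bracket and substitutes $\boldsymbol{C}(\boldsymbol{x}_{k+1}-\boldsymbol{x}^*) = \boldsymbol{r}_{k+1} - \boldsymbol{D}(\boldsymbol{z}_{k+1}-\boldsymbol{z}^*)$ in the cross term, after which a sign rearrangement matches \eqref{eq:lemma_inequality_2} exactly.

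The main obstacle is bookkeeping rather than any sharp estimate: the subgradient of $\boldsymbol{f}$ coming out of the $\boldsymbol{x}$-update is naturally expressed in terms of $\boldsymbol{y}_k$ and $\boldsymbol{z}_k$, whereas the target inequality wants everything written in $\boldsymbol{y}_{k+1}$ and $\boldsymbol{z}_{k+1}$; the cost of that conversion is precisely the extra term $\rho \boldsymbol{D}(\boldsymbol{z}_{k+1}-\boldsymbol{z}_k)$ on the right-hand side. Keeping signs and orientations consistent when the two occurrences of $\boldsymbol{C}(\boldsymbol{x}_{k+1}-\boldsymbol{x}^*)$ are replaced by $\boldsymbol{r}_{k+1}$ minus, respectively plus, $\boldsymbol{D}(\boldsymbol{z}_{k+1}-\boldsymbol{z}^*)$ is the only subtle point. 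Because only convexity of $\boldsymbol{f}$ and each $\mathcal{I}_{\mathcal{X}_i}$ is assumed, no curvature-based estimates enter; Assumption~\ref{assump:saddle_2} is used solely to ensure that a saddle point $(\boldsymbol{x}^*,\boldsymbol{z}^*,\boldsymbol{y}^*)$ exists so that $\boldsymbol{x}^*,\boldsymbol{z}^*$ satisfy the compact constraint $\boldsymbol{C}\boldsymbol{x}^*+\boldsymbol{D}\boldsymbol{z}^*=\boldsymbol{E}$.
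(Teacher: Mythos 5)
Your proposal is correct and follows essentially the same route as the paper's proof: both derive the shifted optimality conditions $-\boldsymbol{C}^\top(\boldsymbol{y}_{k+1}-\rho\boldsymbol{D}(\boldsymbol{z}_{k+1}-\boldsymbol{z}_k)) \in \partial\boldsymbol{f}(\boldsymbol{x}_{k+1})$ and $-\boldsymbol{D}^\top\boldsymbol{y}_{k+1} \in \partial\bigl(\sum_i \mathcal{I}_{\mathcal{X}_i}\bigr)(\boldsymbol{z}_{k+1})$, compare against the saddle point via convexity, and add the two inequalities using $\boldsymbol{C}\boldsymbol{x}^*+\boldsymbol{D}\boldsymbol{z}^*=\boldsymbol{E}$. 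Your use of the subgradient inequality at $\boldsymbol{x}^*$ is just a rephrasing of the paper's step of evaluating the minimized function $\boldsymbol{f}(\boldsymbol{x})+(\boldsymbol{y}_{k+1}-\rho\boldsymbol{D}(\boldsymbol{z}_{k+1}-\boldsymbol{z}_k))^\top\boldsymbol{C}\boldsymbol{x}$ at $\boldsymbol{x}^*$, so the two arguments coincide.
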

\begin{proof}
By \eqref{eq:admm_original:x}, $\boldsymbol{x}_{k+1}$ minimizes $L_{\rho}(\boldsymbol{x}, \boldsymbol{z}_k, \boldsymbol{y}_k)$.
By Assumption \ref{assump:saddle_2}, $\boldsymbol{f}$ is closed, proper, and convex, thereby sub-differentiable.
Thus $L_{\rho}(\boldsymbol{x}, \boldsymbol{z}_k, \boldsymbol{y}_k)$ is subdifferentiable to $\boldsymbol{x}_{k+1}$.
The necessary and sufficient optimality condition is
\begin{equation} \label{eq:optimality_lemma}
\begin{split}
\boldsymbol{0} \in &\partial L_{\rho}(\boldsymbol{x}_{k+1}, \boldsymbol{z}_k, \boldsymbol{y}_k) = \partial \boldsymbol{f}(\boldsymbol{x}_{k+1}) + \boldsymbol{C}^\top \boldsymbol{y}_k + \\
& \rho \boldsymbol{C}^\top (\boldsymbol{C}\boldsymbol{x}_{k+1}+\boldsymbol{D} \boldsymbol{z}_{k} - \boldsymbol{E}).
\end{split}
\end{equation}
Since $\boldsymbol{y}_{k+1} = \boldsymbol{y}_k + \rho \boldsymbol{r}_{k+1}$, plugging $\boldsymbol{y}_k = \boldsymbol{y}_{k+1} - \rho \boldsymbol{r}_{k+1}$ in \eqref{eq:optimality_lemma} yields
\begin{equation} \label{eq:KKT_x}
\boldsymbol{0} \in \partial \boldsymbol{f}(\boldsymbol{x}_{k+1}) + \boldsymbol{C}^\top (\boldsymbol{y}_{k+1} - \rho \boldsymbol{D}(\boldsymbol{z}_{k+1} - \boldsymbol{z}_k)).
\end{equation}
This implies that $\boldsymbol{x}_{k+1}$ minimizes
\begin{equation} \label{eq:x_kp1_lemma}
\boldsymbol{f}(\boldsymbol{x}) + (\boldsymbol{y}_{k+1}-\rho \boldsymbol{D}(\boldsymbol{z}_{k+1}-\boldsymbol{z}_k))^\top \boldsymbol{C} \boldsymbol{x}.
\end{equation}
The matrix $\boldsymbol{C}^\top \boldsymbol{C} = \boldsymbol{I}_{mn} + (\bar{\boldsymbol{H}}^{\top}\bar{\boldsymbol{P}}\bar{\boldsymbol{H}})^\top (\bar{\boldsymbol{H}}^{\top}\bar{\boldsymbol{P}}\bar{\boldsymbol{H}})$ is positive definite, thereby invertible.
Thus, $L_{\rho}(\boldsymbol{x}, \boldsymbol{z}_k, \boldsymbol{y}_k)$ is the summation of a proper, closed, and convex function and a strictly convex quadratic function.
By \cite[Proposition~15.37]{gallier2019fundamentals}, the x-minimization step \eqref{eq:admm_original:x} has a unique solution.

Similarly, $\mathcal{I}_{\mathcal{X}_i}(\boldsymbol{z}_i)$ by definition is closed, proper, and convex, thereby subdifferentiable, where its subgradient is the normal cone of $\mathcal{X}_i$ at $\boldsymbol{z}_i$.
Thus, there holds
\begin{equation} \label{eq:KKT_z}
\boldsymbol{0} \in \partial (\sum_{i=1}^{m} \mathcal{I}_{\mathcal{X}_i}(\boldsymbol{z}_i)) + \boldsymbol{D}^\top \boldsymbol{y}_{k+1} = \sum_{i=1}^{m} \partial \mathcal{I}_{\mathcal{X}_i}(\boldsymbol{z}_i) + \boldsymbol{D}^\top \boldsymbol{y}_{k+1},
\end{equation}
which further implies that $\boldsymbol{z}_{k+1}$ minimizes $L_{\rho}(\boldsymbol{x}_{k+1}, \boldsymbol{z}, \boldsymbol{y}_{k})$ or equivalently $\sum_{i=1}^{m} \mathcal{I}_{\mathcal{X}_i}(\boldsymbol{z}_i) + \boldsymbol{y}_{k+1}^\top \boldsymbol{D} \boldsymbol{z}$.
The matrix $\boldsymbol{D}^\top \boldsymbol{D} = \boldsymbol{I}_{mn}$ is positive definite, thereby invertible.
Similarly, by \cite[Proposition~15.37]{gallier2019fundamentals}, the z-minimization step \eqref{eq:admm_original:z} has a unique solution.

Given \eqref{eq:x_kp1_lemma}, there holds
\begin{equation} \label{eq:x_kp1_lemma_ineq}
\begin{split}
&\boldsymbol{f}(\boldsymbol{x}_{k+1}) + (\boldsymbol{y}_{k+1}-\rho \boldsymbol{D}(\boldsymbol{z}_{k+1}-\boldsymbol{z}_k))^\top \boldsymbol{C} \boldsymbol{x}_{k+1} \leq \\
&\boldsymbol{f}(\boldsymbol{x}^*) + (\boldsymbol{y}_{k+1}-\rho \boldsymbol{D}(\boldsymbol{z}_{k+1}-\boldsymbol{z}_k))^\top \boldsymbol{C} \boldsymbol{x}^*.
\end{split}
\end{equation}
And given \eqref{eq:KKT_z}, there holds
\begin{equation} \label{eq:z_kp1_lemma_ineq}
\sum_{i=1}^{m} \mathcal{I}_{\mathcal{X}_i}(\boldsymbol{z}_{i,k+1}) + \boldsymbol{y}_{k+1}^\top \boldsymbol{D}\boldsymbol{z}_{k+1} \leq \sum_{i=1}^{m} \mathcal{I}_{\mathcal{X}_i}(\boldsymbol{z}_i^*) + \boldsymbol{y}_{k+1}^\top \boldsymbol{D}\boldsymbol{z}^*.
\end{equation}
With $\boldsymbol{C} \boldsymbol{x}^* + \boldsymbol{D} \boldsymbol{z}^* = \boldsymbol{E}$, adding \eqref{eq:x_kp1_lemma_ineq} and \eqref{eq:z_kp1_lemma_ineq} together yields
\begin{equation*}
\begin{split}
\boldsymbol{\ell}_{k+1} - \boldsymbol{\ell}^* \leq &-\boldsymbol{y}_{k+1}^\top \boldsymbol{r}_{k+1} - \rho (\boldsymbol{D}(\boldsymbol{z}_{k+1} - \boldsymbol{z}_k))^\top \cdot \\
&(-\boldsymbol{r}_{k+1} + \boldsymbol{D}(\boldsymbol{z}_{k+1} - \boldsymbol{z}^*)).
\end{split}
\end{equation*}
This completes the proof.
\end{proof}

% Combining Lemma \ref{lemma:inequality_1} with Lemma \ref{lemma:inequality_2}, $\boldsymbol{\ell}_{k+1} - \boldsymbol{\ell}^*$ is bounded by two terms.
Now define a Lyapunov function candidate as
\begin{equation}
V_k = \frac{1}{\rho} \norm{\boldsymbol{y}_k - \boldsymbol{y}^*}^2 + \rho \norm{\boldsymbol{D}(\boldsymbol{z}_k-\boldsymbol{z}^*)}^2.
\end{equation}

\begin{lemma} \label{lemma:inequality_3}
Let Assumption \ref{assum:existence}, \ref{assum:consistency}, \ref{assump:well_configured}, and \ref{assump:saddle_2} hold. The following inequality holds:
\begin{equation} \label{eq:lemma_inequality_3}
V_{k+1} \leq V_k - \rho \norm{\boldsymbol{r}_{k+1}}^2 - \rho \norm{\boldsymbol{D}(\boldsymbol{z}_{k+1} - \boldsymbol{z}_k)}^2.
\end{equation}
\end{lemma}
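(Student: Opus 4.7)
My plan is to run the standard ADMM Lyapunov descent argument, adapted here to the stacked constraint $\boldsymbol{C}\boldsymbol{x} + \boldsymbol{D}\boldsymbol{z} = \boldsymbol{E}$. The starting point is to add the inequalities \eqref{eq:lemma_inequality_1} and \eqref{eq:lemma_inequality_2}: the optimality gaps $\boldsymbol{\ell}^* - \boldsymbol{\ell}_{k+1}$ and $\boldsymbol{\ell}_{k+1} - \boldsymbol{\ell}^*$ cancel and, after multiplying by $2$, I obtain
\begin{equation*}
2(\boldsymbol{y}_{k+1} - \boldsymbol{y}^*)^\top \boldsymbol{r}_{k+1} + 2\rho \bigl(\boldsymbol{D}(\boldsymbol{z}_{k+1} - \boldsymbol{z}_k)\bigr)^\top \bigl(-\boldsymbol{r}_{k+1} + \boldsymbol{D}(\boldsymbol{z}_{k+1} - \boldsymbol{z}^*)\bigr) \leq 0.
\end{equation*}
The goal will be to reshape the two terms on the left-hand side so that $V_{k+1} - V_k$ appears explicitly, with the remaining pieces absorbing the claimed descent terms.

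For the dual piece I will use the multiplier recursion \eqref{eq:admm_2:lambda}--\eqref{eq:admm_2:mu}, which gives $\boldsymbol{y}_{k+1} = \boldsymbol{y}_k + \rho \boldsymbol{r}_{k+1}$, together with the algebraic identity $2\boldsymbol{a}^\top(\boldsymbol{a} - \boldsymbol{b}) = \norm{\boldsymbol{a}}^2 - \norm{\boldsymbol{b}}^2 + \norm{\boldsymbol{a} - \boldsymbol{b}}^2$, to convert $2(\boldsymbol{y}_{k+1} - \boldsymbol{y}^*)^\top \boldsymbol{r}_{k+1}$ into $\tfrac{1}{\rho}(\norm{\boldsymbol{y}_{k+1} - \boldsymbol{y}^*}^2 - \norm{\boldsymbol{y}_k - \boldsymbol{y}^*}^2) + \rho \norm{\boldsymbol{r}_{k+1}}^2$. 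For the primal piece I will split $\boldsymbol{D}(\boldsymbol{z}_{k+1} - \boldsymbol{z}^*) = \boldsymbol{D}(\boldsymbol{z}_{k+1} - \boldsymbol{z}_k) + \boldsymbol{D}(\boldsymbol{z}_k - \boldsymbol{z}^*)$ and reuse the same identity to extract the telescoping contribution $\rho(\norm{\boldsymbol{D}(\boldsymbol{z}_{k+1} - \boldsymbol{z}^*)}^2 - \norm{\boldsymbol{D}(\boldsymbol{z}_k - \boldsymbol{z}^*)}^2)$ and the diagonal piece $\rho \norm{\boldsymbol{D}(\boldsymbol{z}_{k+1} - \boldsymbol{z}_k)}^2$, leaving behind a residual cross term $-2\rho(\boldsymbol{D}(\boldsymbol{z}_{k+1} - \boldsymbol{z}_k))^\top \boldsymbol{r}_{k+1}$. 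Collecting everything yields the intermediate bound
\begin{equation*}
V_{k+1} - V_k + \rho \norm{\boldsymbol{r}_{k+1}}^2 + \rho \norm{\boldsymbol{D}(\boldsymbol{z}_{k+1} - \boldsymbol{z}_k)}^2 \leq 2\rho \bigl(\boldsymbol{D}(\boldsymbol{z}_{k+1} - \boldsymbol{z}_k)\bigr)^\top \boldsymbol{r}_{k+1}.
\end{equation*}

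The hard part will be showing that the right-hand side is nonpositive, which is where the $\boldsymbol{z}$-update optimality has to be invoked at two successive iterations. Specifically, the KKT inclusion \eqref{eq:KKT_z} from the proof of Lemma~\ref{lemma:inequality_2}, applied at iteration $k+1$, gives $-\boldsymbol{D}^\top \boldsymbol{y}_{k+1} \in \partial \sum_{i=1}^m \mathcal{I}_{\mathcal{X}_i}(\boldsymbol{z}_{i,k+1})$, and the analogous condition at iteration $k$ (after folding in $\boldsymbol{y}_k = \boldsymbol{y}_{k-1} + \rho \boldsymbol{r}_k$) gives $-\boldsymbol{D}^\top \boldsymbol{y}_k \in \partial \sum_{i=1}^m \mathcal{I}_{\mathcal{X}_i}(\boldsymbol{z}_{i,k})$. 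Monotonicity of the subdifferential of a closed, proper, convex function then forces $(\boldsymbol{z}_{k+1} - \boldsymbol{z}_k)^\top \boldsymbol{D}^\top(\boldsymbol{y}_{k+1} - \boldsymbol{y}_k) \leq 0$, and substituting $\boldsymbol{y}_{k+1} - \boldsymbol{y}_k = \rho \boldsymbol{r}_{k+1}$ delivers exactly $(\boldsymbol{D}(\boldsymbol{z}_{k+1} - \boldsymbol{z}_k))^\top \boldsymbol{r}_{k+1} \leq 0$. Plugging this back into the intermediate bound produces the claimed descent \eqref{eq:lemma_inequality_3}. A minor technicality I will note is that the KKT inclusion at iteration $k$ presumes $k \geq 1$; the base case $k = 0$ is absorbed in the initialization and does not affect the asymptotic statements in Theorem~\ref{theorem:update_2}.
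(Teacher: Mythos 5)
Your proposal is correct and follows essentially the same route as the paper's proof: add Lemmas~\ref{lemma:inequality_1} and~\ref{lemma:inequality_2}, complete squares to expose the telescoping $V_{k+1}-V_k$, and kill the cross term $2\rho\,\boldsymbol{r}_{k+1}^\top\boldsymbol{D}(\boldsymbol{z}_{k+1}-\boldsymbol{z}_k)$ using the $\boldsymbol{z}$-update optimality at two successive iterations (your monotonicity-of-the-subdifferential phrasing is exactly the paper's step of adding the two minimizer inequalities). Your closing remark about the $k=0$ base case is a legitimate caveat that the paper glosses over, but it does not change the argument.
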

\begin{proof}
Since Assumption \ref{assump:saddle_2} holds, \eqref{eq:lemma_inequality_1} from Lemma \ref{lemma:inequality_1} and \eqref{eq:lemma_inequality_2} from Lemma \ref{lemma:inequality_2} hold.
Adding \eqref{eq:lemma_inequality_1} and \eqref{eq:lemma_inequality_2} together and multiplying by 2 yields
\begin{equation} \label{eq:lemma_ineq_add}
\begin{split}
&2(\boldsymbol{y}_{k+1}-\boldsymbol{y}^*)^\top \boldsymbol{r}_{k+1} - 2\rho(\boldsymbol{D}(\boldsymbol{z}_{k+1}-\boldsymbol{z}_k))^\top \boldsymbol{r}_{k+1} + \\
& 2\rho (\boldsymbol{D}(\boldsymbol{z}_{k+1}-\boldsymbol{z}_k))^\top (\boldsymbol{D}(\boldsymbol{z}_{k+1}-\boldsymbol{z}_k)) \leq \boldsymbol{0}.
\end{split}
\end{equation}

First, by substituting $\boldsymbol{y}_{k+1} = \boldsymbol{y}_k + \rho \boldsymbol{r}_{k+1}$, rewriting the first term of \eqref{eq:lemma_ineq_add} yields
\begin{equation} \label{eq:lemma_ineq_sub_1}
2(\boldsymbol{y}_k-\boldsymbol{y}^*)^\top \boldsymbol{r}_{k+1} + \rho \norm{\boldsymbol{r}_{k+1}}^2 + \rho \norm{\boldsymbol{r}_{k+1}}^2.
\end{equation}
Then substituting $\boldsymbol{r}_{k+1}=\frac{1}{\rho} (\boldsymbol{y}_{k+1}-\boldsymbol{y}_k)$ in the first two terms of \eqref{eq:lemma_ineq_sub_1} yields
\begin{equation} \label{eq:lemma_ineq_sub_2}
\frac{2}{\rho} (\boldsymbol{y}_k - \boldsymbol{y}^*)^\top (\boldsymbol{y}_{k+1} - \boldsymbol{y}_k) + \frac{1}{\rho} \norm{\boldsymbol{y}_{k+1}-\boldsymbol{y}_k}^2 + \rho \norm{\boldsymbol{r}_{k+1}}^2.
\end{equation}
Since $\boldsymbol{y}_{k+1} - \boldsymbol{y}_k = (\boldsymbol{y}_{k+1} - \boldsymbol{y}^*) - (\boldsymbol{y}_k - \boldsymbol{y}^*)$, \eqref{eq:lemma_ineq_sub_2} can be written as
\begin{equation} \label{eq:lemma_ineq_sub_3}
\frac{1}{\rho} (\norm{\boldsymbol{y}_{k+1}-\boldsymbol{y}^*}^2 - \norm{\boldsymbol{y}_k-\boldsymbol{y}^*}^2) + \rho \norm{\boldsymbol{r}_{k+1}}^2.
\end{equation}
Now the rest of the terms in \eqref{eq:lemma_ineq_add} and \eqref{eq:lemma_ineq_sub_1} is
$\rho \norm{\boldsymbol{r}_{k+1}}^2 - 2\rho(\boldsymbol{D}(\boldsymbol{z}_{k+1}-\boldsymbol{z}_k))^\top \boldsymbol{r}_{k+1} + 2\rho (\boldsymbol{D}(\boldsymbol{z}_{k+1}-\boldsymbol{z}_k))^\top (\boldsymbol{D}(\boldsymbol{z}_{k+1}-\boldsymbol{z}_k))$.
Substituting $\boldsymbol{z}_{k+1} - \boldsymbol{z}^* = (\boldsymbol{z}_{k+1} - \boldsymbol{z}_k) + (\boldsymbol{z}_k - \boldsymbol{z}^*)$
in the last term of the above expression and rearranging the expression into a quadratic form yields
\begin{equation}
\begin{split}
&\rho \norm{\boldsymbol{D}(\boldsymbol{z}_{k+1} - \boldsymbol{z}_k)}^2 + \rho \norm{\boldsymbol{r}_{k+1} - \boldsymbol{D}(\boldsymbol{z}_{k+1}-\boldsymbol{z}_k)}^2 \\
&+ 2\rho (\boldsymbol{D}(\boldsymbol{z}_{k+1}-\boldsymbol{z}_k))^\top(\boldsymbol{D}(\boldsymbol{z}_k-\boldsymbol{z}^*)).
\end{split}
\end{equation}
Substituting $\boldsymbol{z}_{k+1} - \boldsymbol{z}_k = (\boldsymbol{z}_{k+1}-\boldsymbol{z}^*) - (\boldsymbol{z}_k-\boldsymbol{z}^*)$ in the last two terms of the above expression yields
\begin{equation} \label{eq:lemma_ineq_sub_4}
\begin{split}
& \rho \norm{\boldsymbol{r}_{k+1}-\boldsymbol{D}(\boldsymbol{z}_{k+1}-\boldsymbol{z}_k)}^2 \\
& + \rho (\norm{\boldsymbol{D}(\boldsymbol{z}_{k+1}-\boldsymbol{z}^*)}^2 - \norm{\boldsymbol{D}(\boldsymbol{z}_k - \boldsymbol{z}^*)}^2).
\end{split}
\end{equation}

Thus, given \eqref{eq:lemma_ineq_sub_3} and \eqref{eq:lemma_ineq_sub_4}, \eqref{eq:lemma_ineq_add} can be written as
\begin{equation} \label{eq:lemma_ineq_sub_5}
V_k - V_{k+1} \geq \rho \norm{\boldsymbol{r}_{k+1} - \boldsymbol{D}(\boldsymbol{z}_{k+1} - \boldsymbol{z}_k)}^2.
\end{equation}
To prove the inequality \eqref{eq:lemma_inequality_3}, it is sufficient to show that the term $-2\rho \boldsymbol{r}_{k+1}^\top (\boldsymbol{D}(\boldsymbol{z}_{k+1}-\boldsymbol{z}_k))$ in the expanded form of the right-hand side of \eqref{eq:lemma_ineq_sub_5} is positive.

From \eqref{eq:KKT_z} of Lemma \ref{lemma:inequality_2}, $\boldsymbol{z}_{k+1}$ minimizes $\sum_{i=1}^{m} \mathcal{I}_{\mathcal{X}_i}(\boldsymbol{z}_i) + \boldsymbol{y}_{k+1}^\top \boldsymbol{D} \boldsymbol{z}$ and similarly, $\boldsymbol{z}_k$ minimizes $\sum_{i=1}^{m} \mathcal{I}_{\mathcal{X}_i}(\boldsymbol{z}_i) + \boldsymbol{y}_k^\top \boldsymbol{D} \boldsymbol{z}$.
Thus, there exist the following inequality expressions
\begin{equation*}
\begin{split}
&\sum_{i=1}^{m} \mathcal{I}_{\mathcal{X}_i}(\boldsymbol{z}_{i,k+1}) + \boldsymbol{y}_{k+1}^\top \boldsymbol{D} \boldsymbol{z}_{k+1} \leq \sum_{i=1}^{m} \mathcal{I}_{\mathcal{X}_i}(\boldsymbol{z}_{i,k}) + \boldsymbol{y}_{k+1}^\top \boldsymbol{D} \boldsymbol{z}_{k}, \\
&\sum_{i=1}^{m} \mathcal{I}_{\mathcal{X}_i}(\boldsymbol{z}_{i,k}) + \boldsymbol{y}_{k}^\top \boldsymbol{D} \boldsymbol{z}_{k} \leq \sum_{i=1}^{m} \mathcal{I}_{\mathcal{X}_i}(\boldsymbol{z}_{i,k+1}) + \boldsymbol{y}_{k}^\top \boldsymbol{D} \boldsymbol{z}_{k+1},
\end{split}
\end{equation*}
and adding the above two expressions together yields
\begin{equation}
(\boldsymbol{y}_{k+1} - \boldsymbol{y}_k)^\top (\boldsymbol{D}(\boldsymbol{z}_{k+1} - \boldsymbol{z}_k)) \leq 0.
\end{equation}
Given the update rule \eqref{eq:admm_original:multiplier}, the constraint residual $\boldsymbol{r}_{k+1}$ and $\rho > 0$, substituting $\boldsymbol{y}_{k+1} - \boldsymbol{y}_k = \rho \boldsymbol{r}_{k+1}$ yields $-2\rho \boldsymbol{r}_{k+1}^\top (\boldsymbol{D}(\boldsymbol{z}_{k+1}-\boldsymbol{z}_k)) \geq 0$.
Thus,
\begin{equation*}
\begin{split}
V_k - V_{k+1} &\geq \rho \norm{\boldsymbol{r}_{k+1} - \boldsymbol{D}(\boldsymbol{z}_{k+1} - \boldsymbol{z}_k)}^2 \\
&\geq \rho \norm{\boldsymbol{r}_{k+1}}^2 + \rho \norm{\boldsymbol{D}(\boldsymbol{z}_{k+1} - \boldsymbol{z}_k)}^2.
\end{split}
\end{equation*}
This completes the proof.
\end{proof}

Finally, the proof of Theorem \ref{theorem:update_2} is provided below.
\begin{proof}[\textbf{Proof of Theorem~\ref{theorem:update_2}}]

Let Assumption \ref{assum:existence}, \ref{assum:consistency}, \ref{assump:well_configured}, and \ref{assump:saddle_2} hold.
By Assumption~\ref{assump:saddle_2}, $ L_0(\boldsymbol{x}^*, \boldsymbol{z}^*, \boldsymbol{y}^*)$ is finite for any saddle point $(\boldsymbol{x}^*, \boldsymbol{z}^*, \boldsymbol{y}^*)$.
By \cite[Theorem~28.3]{Rockafellar+1970}, Assumption~\ref{assump:saddle_2} is equivalent to the fact that the KKT (Karush–Kuhn–Tucker) conditions are satisfied by the (not necessarily unique) saddle point $(\boldsymbol{x}^*, \boldsymbol{z}^*, \boldsymbol{y}^*)$,
\begin{equation} \label{eq:KKT_saddle}
\begin{split}
&\boldsymbol{C} \boldsymbol{x}^* + \boldsymbol{D} \boldsymbol{z}^* - \boldsymbol{E} = \boldsymbol{0}, \\
&\boldsymbol{0} \in \partial \boldsymbol{f}(\boldsymbol{x}^*) + \textstyle\sum_{i=1}^{m} \partial \mathcal{I}_{\mathcal{X}_i}(\boldsymbol{z}_i^*) + \boldsymbol{C}^\top \boldsymbol{y}^* + \boldsymbol{D}^\top \boldsymbol{y}^*.
\end{split}
\end{equation}
This also implies that $(\boldsymbol{x}^*, \boldsymbol{z}^*)$ is a solution to Problem \eqref{problem_interest_revised_2} and $\boldsymbol{y}^*$ is dual optimal and strong duality holds.

By Lemma~\ref{lemma:inequality_3}, there holds $V_{k+1} \leq V_k - \rho \norm{\boldsymbol{r}_{k+1}}^2 - \rho \norm{\boldsymbol{D}(\boldsymbol{z}_{k+1} - \boldsymbol{z}_k)}^2$.
So, there holds $V_k \leq V_0$, which further implies that $\boldsymbol{y}_k$ and $\boldsymbol{D}\boldsymbol{z}_k$ are bounded.
Iterating and adding the above inequality from 0 until $k$ yields
\begin{equation*}
V_{k+1} \leq V_0 - \rho \textstyle\sum_{k=0}^{\infty} (\norm{\boldsymbol{r}_{k+1}}^2 + \norm{\boldsymbol{D}(\boldsymbol{z}_{k+1}-\boldsymbol{z}_k)}^2).
\end{equation*}
Since $0 \leq V_{k+1} \leq V_0$, the above inequality implies that
\begin{equation*}
\rho \textstyle\sum_{k=0}^{\infty} (\norm{\boldsymbol{r}_{k+1}}^2 + \norm{\boldsymbol{D}(\boldsymbol{z}_{k+1}-\boldsymbol{z}_k)}^2) \leq V_0 - V_{k+1} \leq V_0,
\end{equation*}
which further implies that the series $\sum_{k=0}^{\infty} \boldsymbol{r}_k$ and $\sum_{k=0}^{\infty} \boldsymbol{D}(\boldsymbol{z}_{k+1} - \boldsymbol{z}_k)$ converge.
Thus, $\boldsymbol{r}_k \to \boldsymbol{0}$ and $\boldsymbol{D}(\boldsymbol{z}_{k+1}-\boldsymbol{z}_k) \to \boldsymbol{0}$ as $k \to \infty$.
By the definition of residual $\boldsymbol{r}_k$ in \eqref{eq:constraint_residual}, together with Assumption~\ref{assump:well_configured} and Lemma~\ref{lemma:edge_agreement}, this proves the residual convergence, i.e. $\boldsymbol{x}_k - \boldsymbol{z}_k \to \boldsymbol{0}$ and $\boldsymbol{A}_{ij}(\boldsymbol{x}_{i,k} - \boldsymbol{x}_{j,k})-\boldsymbol{b}_{ij} \to \boldsymbol{0}$ as $k \to \infty$, $\forall (i,j) \in \mathcal{E}$.
This proves the statement (i).

Given the update rule \eqref{eq:admm_original:multiplier} and the definition of residual $\boldsymbol{r}_k$ in \eqref{eq:constraint_residual}, recall that $\boldsymbol{y}_{k+1} = \boldsymbol{y}_k + \rho \boldsymbol{r}_{k+1}$.
Then $\boldsymbol{y}_{k+p} = \boldsymbol{y}_k + \rho (\boldsymbol{r}_{k+1}+\cdots+\boldsymbol{r}_{k+p}), \ p \geq 2$.
Consequently,
\begin{equation*}
\norm{\boldsymbol{y}_{k+p} - \boldsymbol{y}_k} \leq \rho(\norm{\boldsymbol{r}_{k+1}} + \cdots + \norm{\boldsymbol{r}_{k+p}}).
\end{equation*}
Recall that the series $\sum_{k=0}^{\infty} \boldsymbol{r}_k$ converges (and is a Cauchy sequence), then for any $\epsilon >0$, one can find a positive integer $N$ such that
\begin{equation*}
\rho (\norm{\boldsymbol{r}_{k+1}} + \cdots + \norm{\boldsymbol{r}_{k+p}}) < \epsilon, \ \forall k,p+k \geq N.
\end{equation*}
Thus, the sequence $\boldsymbol{y}_k$ is also a Cauchy sequence, thus it converges to a point, denoted as $\tilde{\boldsymbol{y}}$.
Similarly, due to the series $\sum_{k=0}^{\infty} \boldsymbol{D}(\boldsymbol{z}_{k+1} - \boldsymbol{z}_k)$ converges, one deduce that the sequence $\boldsymbol{D}\boldsymbol{z}_k$ converges.
By definition \eqref{eq:constraint_residual}, since $\boldsymbol{C} \boldsymbol{x}_k + \boldsymbol{D} \boldsymbol{z}_k - \boldsymbol{E} = \boldsymbol{r}_k$, the convergence of $\boldsymbol{r}_k$ and $\boldsymbol{D}(\boldsymbol{z}_{k+1} - \boldsymbol{z}_k)$ implies that the sequence $\boldsymbol{C}\boldsymbol{x}_k$ also converges.

Consider the inequality $\boldsymbol{\ell}^* - \boldsymbol{\ell}_{k+1} \leq {\boldsymbol{y}^*}^{\top} \boldsymbol{r}_{k+1}$ from Lemma~\ref{lemma:inequality_1}, the right-hand side goes to zeros as $k \to \infty$ because $\boldsymbol{r}_k \to \boldsymbol{0}$.
Consider the inequality \eqref{eq:lemma_inequality_2} from Lemma~\ref{lemma:inequality_2}, since $\boldsymbol{D}(\boldsymbol{z}_{k+1} - \boldsymbol{z}^*)$ is bounded, both $\boldsymbol{r}_{k+1}$ and $\boldsymbol{D}(\boldsymbol{z}_{k+1}-\boldsymbol{z}_k)$ go to zeros as $k \to \infty$, and the sequence $\boldsymbol{y}_{k+1}$ converges, the right-hand side of \eqref{eq:lemma_inequality_2} goes to zeros as $k \to \infty$.
Thus, $\lim_{k \to \infty} \boldsymbol{\ell}_k = \boldsymbol{\ell}^*$, which proves the objective convergence, i.e. the statement (ii).

Next, one needs to prove that the sequences $\boldsymbol{x}_k$ and $\boldsymbol{z}_k$ converge.
Note that $\boldsymbol{C}^\top \boldsymbol{C}$ and $\boldsymbol{D}^\top \boldsymbol{D}$ are invertible.
Since $\boldsymbol{C}\boldsymbol{x}_k$ and $\boldsymbol{D}\boldsymbol{z}_k$ converge to some values, $\boldsymbol{x}_k$ and $\boldsymbol{z}_k$ converge to their corresponding value multiplied by $(\boldsymbol{C}^\top \boldsymbol{C})^{-1}\boldsymbol{C}^\top$ and $(\boldsymbol{D}^\top \boldsymbol{D})^{-1}\boldsymbol{D}^\top$, respectively.
Denote $\boldsymbol{x}_k$ and $\boldsymbol{z}_k$ converge to $\tilde{\boldsymbol{x}}$ and $\tilde{\boldsymbol{z}}$, respectively.
Next is to prove they converge to an optimal solution.

Since for every iteration $k$, $\boldsymbol{C} \boldsymbol{x}_k + \boldsymbol{D} \boldsymbol{z}_k - \boldsymbol{E} = \boldsymbol{r}_k$, there also holds the following for the limit,
\begin{equation} \label{eq:KKT_constraint}
\boldsymbol{C} \tilde{\boldsymbol{x}} + \boldsymbol{D} \tilde{\boldsymbol{z}} - \boldsymbol{E} = \boldsymbol{0}.
\end{equation}
Using \eqref{eq:KKT_x} for the limit, together with the fact that $\boldsymbol{D}(\boldsymbol{z}_{k+1}-\boldsymbol{z}_k)$ converges to zero, there holds
\begin{equation} \label{eq:KKT_x_limit}
\boldsymbol{0} \in \partial \boldsymbol{f}(\tilde{\boldsymbol{x}}) + \boldsymbol{C}^\top \tilde{\boldsymbol{y}}.
\end{equation}
Using \eqref{eq:KKT_z} for the limit, there holds
\begin{equation} \label{eq:KKT_z_limit}
\boldsymbol{0} \in \textstyle \sum_{i=1}^{m} \partial\mathcal{I}_{\mathcal{X}_i}(\tilde{\boldsymbol{z}}_i) + \boldsymbol{D}^\top \tilde{\boldsymbol{y}}.
\end{equation}
Given \eqref{eq:KKT_x_limit} and \eqref{eq:KKT_z_limit}, there holds
\begin{equation} \label{eq:KKT_final}
\boldsymbol{0} \in \partial \boldsymbol{f}(\tilde{\boldsymbol{x}}) + \sum_{i=1}^{m} \partial\mathcal{I}_{\mathcal{X}_i}(\tilde{\boldsymbol{z}}_i) + \boldsymbol{C}^\top \tilde{\boldsymbol{y}} + \boldsymbol{D}^\top \tilde{\boldsymbol{y}},
\end{equation}
Since \eqref{eq:KKT_constraint} and \eqref{eq:KKT_final} are exactly the KKT equations \eqref{eq:KKT_saddle}. By \cite[Theorem~28.3]{Rockafellar+1970}, one conclude that $(\tilde{\boldsymbol{x}}, \tilde{\boldsymbol{z}}, \tilde{\boldsymbol{y}})$ are one saddle point $(\boldsymbol{x}^*, \boldsymbol{z}^*, \boldsymbol{y}^*)$.
Thus, the proposed rule \eqref{eq:distributed_update_2} drives $\boldsymbol{x}_k$ and $\boldsymbol{z}_k$ to an optimal solution $(\boldsymbol{x}^*, \boldsymbol{z}^*)$ of Problem \eqref{problem_interest_revised_2}.
By \cite[Theorem~28.4]{Rockafellar+1970}, $\boldsymbol{y}^*$ is an optimal solution to the dual problem of Problem \eqref{problem_interest_revised_2}.
Together with Lemma~\ref{lemma:convert_problem}, it proves the statement (iii) and (iv). This completes the proof.
\end{proof}

\subsection{Numerical Simulation} \label{subsec:sim}

This subsection presents a numerical simulation to verify the proposed update rule \eqref{eq:distributed_update_2}.
Suppose there are $m=4$ agents. Denote $\boldsymbol{x}_i \in \mathbb{R}^2$ as the position for agent $i$, which is randomly initialized within $\mathcal{X}_i = {[-100, 100]}^2 \subset \mathbb{R}^2$.
Agents share and update their states to cooperatively minimize a global objective function by individually minimizing their local objective functions.
Also, agents should eventually achieve some desired edge agreements \eqref{example:desired_edge_agree}.

The local objective functions are defined as follows:
\begin{equation}
\begin{split}
&\boldsymbol{f}_1(\boldsymbol{x}_1) = \norm{\boldsymbol{x}_1}^2, \ \boldsymbol{f}_2(\boldsymbol{x}_2) = \norm{\boldsymbol{x}_2 - \matt{2 & 2}^{\top}}^2, \\
& \boldsymbol{f}_3(\boldsymbol{x}_3) = \norm{\boldsymbol{x}_2 + \matt{3 & 3}^{\top}}^2, \ \boldsymbol{f}_4(\boldsymbol{x}_4) = \textstyle\sum_{i=1}^2 e^{\boldsymbol{x}_4[i]},
\end{split}
\end{equation}
where $\boldsymbol{x}_4[i] \in \mathbb{R}$ denotes the $i$-th entry of vector $\boldsymbol{x}_4$.
% $\boldsymbol{f}_i$ defines a specific point/region that each agent $i$ is desired to arrive at.
Note that $\boldsymbol{f}(\boldsymbol{x}) = \sum_{i=1}^m \boldsymbol{f}_i(\boldsymbol{x}_i)$ is strictly convex in $\boldsymbol{x}$.

The network communication topology is shown in Fig. \ref{fig:network_simple} with the edge set $\mathcal{E}=\{ (1,2), (2,3), (3,1), (3,4) \}$.
\begin{figure}[ht]
\centering
\includegraphics[width=0.15\textwidth]{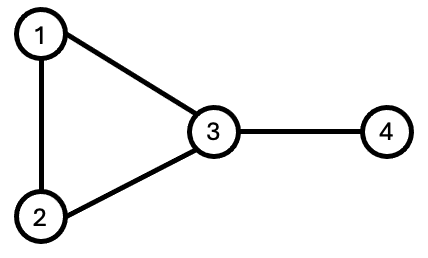}
\caption{The network communication topology.}
\label{fig:network_simple}
\end{figure}
The desired edge agreements are
\begin{equation} \label{example:desired_edge_agree}
\begin{split}
& \boldsymbol{A}_{ij}(\boldsymbol{x}_{i} - \boldsymbol{x}_{j}) = \boldsymbol{b}_{ij}, \ \boldsymbol{A}_{ij} = \boldsymbol{I}_2, \ \forall (i,j) \in \mathcal{E}, \\
&\boldsymbol{b}_{12} = -\boldsymbol{b}_{21} = \matt{0 & 3}^{\top}, \boldsymbol{b}_{23} = -\boldsymbol{b}_{32} = \matt{-2.6 & -1.5}^{\top}, \\
&\boldsymbol{b}_{31} = -\boldsymbol{b}_{13} = \matt{2.6 & -1.5}^{\top}, \boldsymbol{b}_{34} = -\boldsymbol{b}_{43} = \matt{-3 & 0}^{\top}. \\
\end{split}
\end{equation}
The oriented incidence matrix is
\begin{equation*}
\boldsymbol{H} = \matt{ 1 & -1 & 0 & 0 \\ 0 & 1 & -1 & 0 \\ 1 & 0 & -1 & 0 \\ 0 & 0 & 1 & -1}.
\end{equation*}
Introduce the following index to measure how well the edge agreement constraints are satisfied, 
\begin{equation}
W_{1,k} = \textstyle\sum_{(i,j) \in \mathcal{E}} \norm{\boldsymbol{A}_{ij}(\boldsymbol{x}_{i,k} - \boldsymbol{x}_{j,k})-\boldsymbol{b}_{ij}}^2 \geq 0,
\end{equation}
where $W_{1,k}=0$ if and only if all the edge agreements in \eqref{problem_interest:edge} are satisfied.
The numerical result is obtained by applying the update \eqref{eq:distributed_update_2} with CasADi \cite{andersson2019casadi} and the IPOPT solver \cite{wachter2006implementation}, with $\rho = 5.0$.
As shown in the upper portion of Fig. \ref{fig:index_plot}, the proposed update \eqref{eq:distributed_update_2} drives all agents to satisfy the edge agreements \eqref{example:desired_edge_agree} exponentially.
\begin{figure}[ht]
\centering
\includegraphics[width=0.30\textwidth]{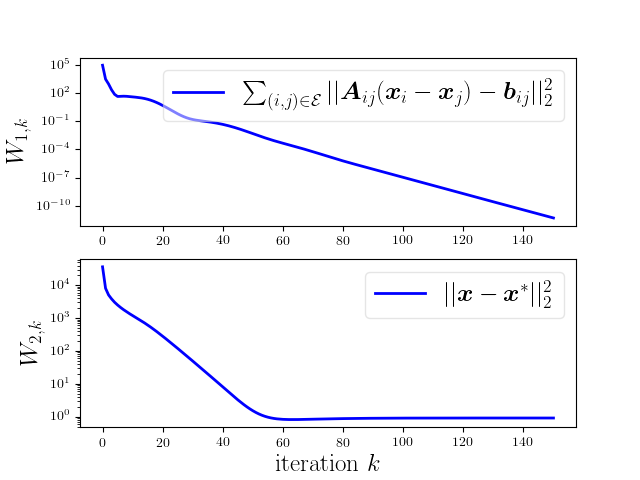}
\caption{A trajectory of $W_{1,k}$ and $W_{2,k}$ over iteration $k$ in log scale.}
\label{fig:index_plot}
\end{figure}

Fig.~\ref{fig:obj_plot} shows the trajectory of $\boldsymbol{f}(\boldsymbol{x}_k)$ over iteration $k$ and the global optimum $\boldsymbol{x}^* \in \mathbb{R}^8$.
$\boldsymbol{x}^*$ is obtained by solving Problem \eqref{problem_interest} with IPOPT in a centralized manner. Define the following index to measure the distance to the global optimum:
\begin{equation}
W_{2,k} = \norm{\boldsymbol{x}_k - \boldsymbol{x}^*}^2 \geq 0,
\end{equation}
where $W_{2,k}=0$ if and only if $\boldsymbol{x}_k = \boldsymbol{x}^*$.
As shown in the lower portion of Fig. \ref{fig:index_plot}, $\boldsymbol{x}_k$ converges to $\boldsymbol{x}^*$ exponentially. The above numerical results validate the proposed algorithm.

\begin{figure}[ht]
\centering
\includegraphics[width=0.30\textwidth]{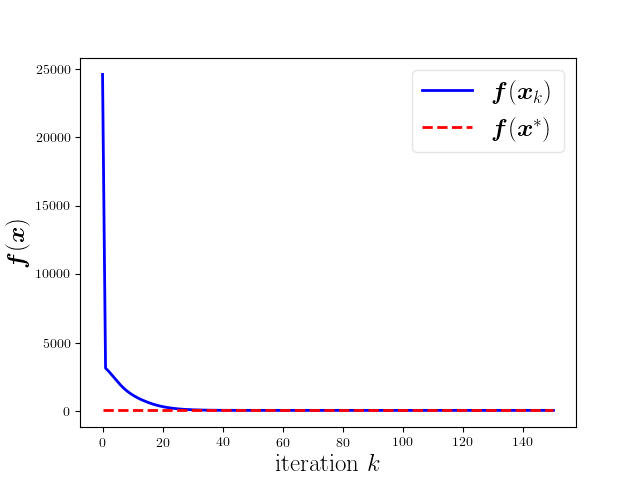}
\caption{A trajectory of $\boldsymbol{f}(\boldsymbol{x}_k)$ and $\boldsymbol{x}^*$ over iteration $k$.}
\label{fig:obj_plot}
\end{figure}

\section{Application: Distributed Battery Network Energy Management} \label{sec:app}

This section presents an application of the proposed theory of distributed optimization under edge agreement.
A distributed battery network energy management problem, as discussed in \cite{fang2016cooperative}, is formulated in the form of distributed MPC and solved using the proposed algorithm.
Numerical simulations are provided to demonstrate the connection between the proposed theory and the application of distributed MPC.

% some papers \cite{fang2016cooperative},
% \cite{farakhor2023distributed},
% \cite{farakhor2023scalable}

\subsection{Problem of Battery Network Management}

Consider a network of Lithium-ion battery energy storage systems (LiBESSs) that contains $m$ nodes, and $\mathcal{V} \triangleq \{ 1, \cdots, m \}$. For each node $i$, there exists a LiBESS, where the battery's nominal maximum capacity is $Q_{i,\mathrm{max}}$ [kWh]; the real-time capacity at time $t_l$ is $Q_{i,l} \leq Q_{i,\mathrm{max}}$; the state-of-charge (SoC) at time $t_l$ is $s_{i,l} \triangleq Q_{i,l}/Q_{i,\mathrm{max}} \cdot 100\%$.
When a charging/discharging power $\tilde{u}_{i,l}$ [kW] at time $t_l$ is applied to node $i$, the battery's discrete-time dynamics is
\begin{equation*}
s_{i,l+1} = s_{i,l} + \frac{\Delta}{3600 Q_{i,\mathrm{max}}} \tilde{\eta}_i(\text{sgn}(\tilde{u}_{i,l})) \tilde{u}_{i,l},
\end{equation*}
where $\Delta >0$ is a discretization step [sec], $\eta_i >0$ represents the charging/discharging efficiency. Specifically, when charging, $\tilde{u}_{i,l} > 0$, $\tilde{\eta}_i \in (0, 1]$; when discharging, $\tilde{u}_{i,l} \leq 0$, $\tilde{\eta}_i \in [1,\infty)$.
The battery dynamics are nonlinear because $\tilde{\eta}_i$ depends on the sign of $\tilde{u}_{i,l}$, reflecting the hysteresis phenomenon of battery dynamics.

The following procedures can be used to rewrite the dynamics as linear.
First, the net charging/discharging power of node $i$ is
\begin{equation}
\tilde{\eta}_i(\text{sgn}(\tilde{u}_{i,l})) \tilde{u}_{i,l} \equiv \matt{\eta_{\mathrm{c},i} & \eta_{\mathrm{d},i}
} \matt{u_{\mathrm{c},i,l} \\ u_{\mathrm{d},i,l}} \triangleq \boldsymbol{\eta}_i \tilde{\boldsymbol{u}}_{i,l}.
\end{equation}
Here, $\boldsymbol{\eta}_i \in \mathbb{R}^{1\times 2}$, where $\eta_{\mathrm{c},i} \in (0, 1]$ and $\eta_{\mathrm{d},i} \in [1, \infty)$ are the charging and discharging efficiency, respectively.
$u_{\mathrm{c},i,l} \in [0, \overline{u}_i]$ and $u_{\mathrm{d},i,l} \in [\underline{u}_i, 0]$ are the charging and discharging power, respectively.
$\underline{u}_{i} < 0$ and $\overline{u}_{i} > 0$ are the discharging and charging power limits, respectively.
Note that the charge/discharge power output to the network is $u_{\mathrm{c},i,l} + u_{\mathrm{d},i,l}$.

Second, with $\tilde{\boldsymbol{u}}_{i,l} \in \mathbb{R}^2$ being treated as a new control variable, the discrete-time dynamics can be rewritten as
\begin{equation}
s_{i,l+1} = s_{i,l} + \alpha_i \boldsymbol{\eta}_i \tilde{\boldsymbol{u}}_{i,l},
\end{equation}
where $\alpha_i \triangleq \frac{\Delta}{3600 Q_{i,\mathrm{max}}}$.
The lower and upper bounds of SoC for each node $i$ are $\underline{s}_{i} \leq s_{i,l} \leq \overline{s}_{i}$.

The LiBESS network is supposed to deliver/absorb electric power to/from an external system.
Denote the known power demand [kW] at time $t_l$ is $P_l$, where $P_l > 0$ and $P_l < 0$ indicates a power output from the LiBESS to the external system and from the external system to the LiBESS, respectively.
The total output power of the network should reach $P_l$:
\begin{equation} \label{eq:batt:p_sum}
-\textstyle\sum_{i=1}^m (u_{\mathrm{c},i,l} + u_{\mathrm{d},i,l}) = -\textstyle\sum_{i=1}^m \matt{1 & 1} \tilde{\boldsymbol{u}}_{i,l} = P_l, \ \forall l.
\end{equation}

Assume the future power demand is known, a cooperative battery network energy management problem at each time step can be formulated in a centralized MPC fashion as follows:
\begin{mini!}|s|
{\{s_{i,l},\tilde{\boldsymbol{u}}_{i,l}\}}{ \textstyle\sum_{i=1}^{m} \textstyle\sum_{l=0}^{T} r_i \norm{\tilde{\boldsymbol{u}}_{i,l}}^2 \label{batt_cen:obj}}
{\label{batt_cen}}{}
\addConstraint{ s_{i,l+1} = s_{i,l} + \alpha_i \boldsymbol{\eta}_i \tilde{\boldsymbol{u}}_{i,l} \label{batt_cen:dyn}}
\addConstraint{ \underline{s}_{i} \leq s_{i,l} \leq \overline{s}_{i}, \ \tilde{\boldsymbol{u}}_{i,l} \in [0, \overline{u}_i] \times [\underline{u}_i, 0] \label{batt_cen:bound}}
\addConstraint{  -\textstyle\sum_{i=1}^m \matt{1 & 1} \tilde{\boldsymbol{u}}_{i,l} = P_l \label{batt_cen:demand} }
\addConstraint{ \forall i \in \mathcal{V}, \ \forall l \in \llbracket 0,T-1 \rrbracket }
\end{mini!}
where the decision variables are $\tilde{\boldsymbol{u}}_{i,l}, \ \forall i \in \mathcal{V}, l \in \llbracket 0,T-1 \rrbracket$ and $s_{i,l}, \ \forall i \in \mathcal{V}, l \in \llbracket 1,T \rrbracket$; $s_{i,0}, \ \forall i \in \mathcal{V}$ is a known initial state; $r_i > 0$ is a prescribed cost parameter.
Note that the time index $l$ in this section differs from the iteration index $k$ used in the previous sections. This distinction is further clarified in Algorithm~\ref{alg:mpc_dist}.

% \textcolor{red}{
% This equation defines a requirement on output:
% $-\textstyle\sum_{i=1}^m u_{i,k} = P_k, \ \forall k \in \llbracket 0,T-1 \rrbracket$.
% Cooperative manipulation defines a summation of end-effector torques/forces to the object similarly such that the total acceleration of the object follows a desired trajectory.
% The equation should be:
% \begin{equation}
% \sum_{i=1}^m F_{i,k} + mass \cdot g = mass \cdot a_{\mathrm{des},k},
% \end{equation}
% where $F_{i,k}$ is the force of manipulator $i$'s end-of-effector at the contact point of the object at time $k$, $a_{\mathrm{des},k}$ is the desired (known) acceleration of the object at time $k$. This equation can be easily extended for torque with desired angular acceleration too.
% }

While solving the centralized MPC problem \eqref{batt_cen} is conceptually simple, it faces significant computational challenges when applied to a large LiBESS network, as the optimization must be solved for each time step.
Therefore, the following subsection reformulates this problem as a distributed optimization problem under specific constraints, allowing it to be addressed within the framework of distributed optimization under edge agreements.

\subsection{Problem Reformulation as Distributed MPC}

This subsection reformulates the problem \eqref{batt_cen} into a distributed MPC problem, structured according to the framework of distributed optimization under edge agreements.
A distributed MPC algorithm is then proposed to solve the reformulated problem.

To locally capture the entire picture of the global network and formulate the power demand constraint, each node creates virtual copies of the control variables for all the other nodes, in addition to its own controls.
The resulting extended control variable of node $i$ is
\begin{equation} \label{eq:control_extended_k}
\boldsymbol{u}_{i,l} \triangleq \text{col} \{ \tilde{\boldsymbol{u}}_{i,l,1}, \cdots, \tilde{\boldsymbol{u}}_{i,l,j}, \cdots, \tilde{\boldsymbol{u}}_{i,l,m} \} \in \mathbb{R}^{2m},
\end{equation}
where $\tilde{\boldsymbol{u}}_{i,l,j} \in \mathbb{R}^2$ indicates a copy of $\tilde{\boldsymbol{u}}_{j,l}$ by node $i$.
Note that $\tilde{\boldsymbol{u}}_{i,l,j}$ is not necessarily equal to $\tilde{\boldsymbol{u}}_{j,l}$ initially. However, all nodes shall eventually reach an agreement on these control copies after coordination and communication.

The power demand \eqref{eq:batt:p_sum} can be rewritten as $-\boldsymbol{1}_{1\times 2m} \boldsymbol{u}_{i,l} = P_l$.
For node $i$, the collection of $\boldsymbol{u}_{i,l}$ within the prediction window $\llbracket 0, T \rrbracket $, i.e. from the current time up to $T$ time steps ahead, is
\begin{equation} \label{eq:control_extended_horizon}
\boldsymbol{u}_i \triangleq \text{col} \{ \boldsymbol{u}_{i,0}, \cdots, \boldsymbol{u}_{i,l}, \cdots, \boldsymbol{u}_{i,T-1} \} \in \mathbb{R}^{2mT}.
\end{equation}
$\boldsymbol{u}_i$ represents node $i$'s replica of all the control inputs for the entire network within the window $\llbracket 0, T \rrbracket $.
Thus, the power demand \eqref{eq:batt:p_sum} can be further written as
\begin{equation} \label{eq:batt:p_demand}
\boldsymbol{E} \boldsymbol{u}_i = \boldsymbol{P}, \ \forall i \in \mathcal{V},
\end{equation}
% $\boldsymbol{E} \triangleq \repdiag(-\boldsymbol{1}_{1\times 2m}, T) \in \mathbb{R}^{T\times 2mT}$
where $\boldsymbol{E} \triangleq \boldsymbol{I}_T \otimes -\boldsymbol{1}_{1\times 2m} \in \mathbb{R}^{T\times 2mT}$; $\boldsymbol{P} \triangleq \text{col} \{ P_0, \cdots, P_{T-1} \} \in \mathbb{R}^T$ indicates a vector of power demand from the current time up to $T-1$ time steps ahead.
% is a block diagonal matrix with $T$ diagonal blocks, and each diagonal block is $-\boldsymbol{1}_{1\times m}$
To eventually reach an agreement on $\boldsymbol{u}_i$, there exists a consistency constraint among nodes:
\begin{equation} \label{eq:batt:control_agree}
\boldsymbol{u}_i = \boldsymbol{u}_j, \ \forall (i,j) \in \mathcal{E}.
\end{equation}
Denote node $i$'s predicted state vector within the window as
\begin{equation}
\boldsymbol{x}_i \triangleq \text{col} \{ s_{i,1}, \cdots, s_{i,l}, \cdots, s_{i,T} \} \in \mathbb{R}^{T}.
\end{equation}
Given the state and control constraint \eqref{batt_cen:bound}, two closed convex sets $\mathcal{X}_i \subset \mathbb{R}^{T}$ and $\mathcal{U}_i \subset \mathbb{R}^{2mT}$ can be defined accordingly to represent the constraint on $\boldsymbol{x}_i$ and $\boldsymbol{u}_i$, respectively.
The cost function \eqref{batt_cen:obj} can be rewritten as $\textstyle\sum_{i=1}^m \boldsymbol{u}_i^\top \boldsymbol{R} \boldsymbol{u}_i$,
where $\boldsymbol{R} \triangleq \boldsymbol{I}_T \otimes \diag(r_1,r_1,r_2,r_2,\cdots,r_m,r_m) \in \mathbb{R}^{2mT\times 2mT}$.
% $\boldsymbol{R} \triangleq \repdiag(\diag(r_1,r_1,r_2,r_2,\cdots,r_m,r_m), T) \in \mathbb{R}^{2mT\times 2mT}$

% where $\boldsymbol{R} \in \mathbb{R}^{mT\times mT}$ is a block diagonal matrix with $T$ diagonal blocks, and each diagonal block is $\diag(r_1,\cdots,r_m) \in \mathbb{R}^{m\times m}$.

% \ell(\boldsymbol{x}_i,\boldsymbol{u}_i) \triangleq
The system dynamics constraint \eqref{batt_cen:dyn} can be rewritten as
\begin{equation} \label{eq:batt:dyn_all}
\boldsymbol{A}_i \boldsymbol{x}_i + \boldsymbol{B}_i \boldsymbol{u}_i = \boldsymbol{C}_i,
\end{equation}
where $\boldsymbol{A}_i \triangleq \matt{1 & 0 & \cdots & 0 \\ -1 & 1 & \cdots & 0 \\ & \ddots & \ddots & \\ 0 & \cdots & -1 & 1} \in \mathbb{R}^{T\times T}$ is a zero matrix except the ones in the main diagonals and the negative ones in the sub-diagonals;
% $\boldsymbol{B}_i \triangleq \repdiag(-\alpha_i \boldsymbol{\eta}_i \boldsymbol{e}_{i}^\top, T) \in \mathbb{R}^{T\times 2mT}$
$\boldsymbol{B}_i \triangleq \boldsymbol{I}_T \otimes -\alpha_i \boldsymbol{\eta}_i \boldsymbol{e}_{i}^\top \in \mathbb{R}^{T\times 2mT}$, where $\boldsymbol{e}_{i} \in \mathbb{R}^{2m \times 2}$ is a matrix with $m$ 2-by-2 vertical blocks; $\boldsymbol{e}_{i}$ contains only one $\boldsymbol{I}_2$ in the $i$-th block and $\boldsymbol{0}_{2\times 2}$ otherwise;
$\boldsymbol{C}_i \triangleq \text{col}\{ s_{i,0} , \boldsymbol{0}_{T-1}\} \in \mathbb{R}^{T}$.

For each node $i$, the decision variables are $\boldsymbol{x}_i$ and $\boldsymbol{u}_i$. Thus, one can concatenate them vertically as
\begin{equation} \label{eq:define_xi_batt}
\boldsymbol{\xi}_i \triangleq \text{col}\{ \boldsymbol{x}_i, \boldsymbol{u}_i \} \in \mathbb{R}^{(2m+1)T} \subset \Xi_i,
\end{equation}
where $\Xi_i \triangleq \mathcal{X}_i \times \mathcal{U}_i$.
Accordingly, the constraints \eqref{eq:batt:p_demand} and \eqref{eq:batt:dyn_all} can be concatenated as
\begin{equation}
\overline{\boldsymbol{A}}_i \boldsymbol{\xi}_i = \overline{\boldsymbol{B}}_i,
\end{equation}
where $\overline{\boldsymbol{A}}_i \triangleq \matt{\boldsymbol{A}_i & \boldsymbol{B}_i \\ \boldsymbol{0}_{T\times T} & \boldsymbol{E}} \in \mathbb{R}^{2T \times (2m+1)T}$, and $\overline{\boldsymbol{B}}_i \triangleq \text{col}\{ \boldsymbol{C}_i, \boldsymbol{P} \} \in \mathbb{R}^{2T}$.
The cost function \eqref{batt_cen:obj} can be further written as $\textstyle\sum_{i=1}^m \boldsymbol{\xi}_i^\top \overline{\boldsymbol{R}} \boldsymbol{\xi}_i$,
where $\overline{\boldsymbol{R}} \triangleq \diag(\boldsymbol{0}_{T\times T}, \boldsymbol{R}) \in \mathbb{R}^{(2m+1)T \times (2m+1)T}$.
The control consistency constraint \eqref{eq:batt:control_agree} can be written as
\begin{equation} \label{eq:control_consistency}
\boldsymbol{A}_{ij} (\boldsymbol{\xi}_i - \boldsymbol{\xi}_j) = \boldsymbol{b}_{ij} \equiv \boldsymbol{0}_{mT}, \ \forall (i,j) \in \mathcal{E},
\end{equation}
where $\boldsymbol{A}_{ij} = \matt{ \boldsymbol{0}_{2mT\times T} & \boldsymbol{I}_{2mT} } \in \mathbb{R}^{2mT \times (2m+1)T}$. Thus,
\begin{equation}
\boldsymbol{P}_{ij} (\boldsymbol{\xi}_i-\boldsymbol{\xi}_j-\bar{\boldsymbol{b}}_{ij}) = \boldsymbol{0}, \ \forall (i,j) \in \mathcal{E},
\end{equation}
where $\boldsymbol{P}_{ij} = \boldsymbol{A}_{ij}^{\top} (\boldsymbol{A}_{ij}\boldsymbol{A}_{ij}^{\top})^{-1}\boldsymbol{A}_{ij} = \matt{ \boldsymbol{0}_{T\times T} & \boldsymbol{0}_{T\times 2mT} \\ \boldsymbol{0}_{2mT\times T} & \boldsymbol{I}_{2mT} } \in \mathbb{R}^{(2m+1)T \times (2m+1)T}$, and $\bar{\boldsymbol{b}}_{ij} = \boldsymbol{A}_{ij}^{\top} (\boldsymbol{A}_{ij}\boldsymbol{A}_{ij}^{\top})^{-1} \boldsymbol{b}_{ij} = \boldsymbol{0}_{(2m+1)T}$.

Therefore, the centralized MPC problem \eqref{batt_cen} can be rewritten as the following distributed optimization problem:
\begin{mini!}|s|
{\{\boldsymbol{\xi}_i\}_{i=1}^m}{ \textstyle\sum_{i=1}^m \boldsymbol{\xi}_i^\top \overline{\boldsymbol{R}} \boldsymbol{\xi}_i \label{batt_dist:obj}}
{\label{batt_dist}}{}
\addConstraint{ \boldsymbol{\xi}_i \in \Xi_i, \ \forall i \in \mathcal{V} }
\addConstraint{ \overline{\boldsymbol{A}}_i \boldsymbol{\xi}_i = \overline{\boldsymbol{B}}_i, \ \forall i \in \mathcal{V} \label{batt_dist:linear_const} }
\addConstraint{ \boldsymbol{P}_{ij} (\boldsymbol{\xi}_i-\boldsymbol{\xi}_j-\bar{\boldsymbol{b}}_{ij}) = \boldsymbol{0}, \ \forall (i,j) \in \mathcal{E}. }
\end{mini!}
As discussed in Remark~\ref{remark:linear_con}, the intersection of the convex set $\Xi_i$ and the hyperplane defined by the linear constraint \eqref{batt_dist:linear_const} is closed and convex.
Thus, define the new local closed and convex set for each node $i$ as
\begin{equation}
\bar{\Xi}_i \triangleq \{ \boldsymbol{z}_i \in \Xi_i \ \vert \ \overline{\boldsymbol{A}}_i \boldsymbol{z}_i - \overline{\boldsymbol{B}}_i = \boldsymbol{0}_{2T} \}.
\end{equation}
Then the problem \eqref{batt_dist} is equivalent to
\begin{mini!}|s|
{\{\boldsymbol{\xi}_i\}_{i=1}^m}{ \textstyle\sum_{i=1}^m \boldsymbol{\xi}_i^\top \overline{\boldsymbol{R}} \boldsymbol{\xi}_i \label{batt_dist_2:obj}}
{\label{batt_dist_2}}{}
\addConstraint{ \boldsymbol{\xi}_i \in \bar{\Xi}_i, \ \forall i \in \mathcal{V} }
\addConstraint{ \boldsymbol{P}_{ij} (\boldsymbol{\xi}_i-\boldsymbol{\xi}_j-\bar{\boldsymbol{b}}_{ij}) = \boldsymbol{0}, \ \forall (i,j) \in \mathcal{E}. \label{batt_dist_2:edge} }
\end{mini!}
\begin{remark}\label{remark:equivalent_problem}
The objective function \eqref{batt_dist_2:obj} is a convex function of decision variables $\boldsymbol{\xi}_i$.
The constraint \eqref{batt_dist_2:edge} or its equivalent \eqref{eq:control_consistency} is essentially the edge agreement constraint \eqref{problem_interest:edge}.
Thus, the problem \eqref{batt_dist_2} is essentially the same as the problem of interest \eqref{problem_interest}.
Therefore, one can adopt Algorithm~\ref{alg:algo_proposed} to solve the problem \eqref{batt_dist_2}, where the convergence is guaranteed by Theorem~\ref{theorem:update_2}.
\end{remark}

% \begin{mini!}|s|
% {{\{\boldsymbol{\xi}_i,\boldsymbol{z}_i\}}_{i=1}^m}{ \sum_{i=1}^{m} \boldsymbol{\xi}_i^\top \overline{\boldsymbol{R}} \boldsymbol{\xi}_i + \sum_{i=1}^{m} \mathcal{I}_{\Xi_i}(\boldsymbol{z}_i) \label{problem_batt_after:obj}}
% {\label{problem_batt_after}}{}
% \addConstraint{ \boldsymbol{\xi} = \boldsymbol{z} \label{problem_batt_after:equality}}
% \addConstraint{ \bar{\boldsymbol{H}}^{\top}\bar{\boldsymbol{P}}(\bar{\boldsymbol{H}} \boldsymbol{\xi} - \bar{\boldsymbol{b}}) = \boldsymbol{0}_{m(2m+1)T} \label{problem_batt_after:edge}}
% \addConstraint{ \overline{\boldsymbol{A}}_i \boldsymbol{z}_i - \overline{\boldsymbol{B}}_i = \boldsymbol{0}_{2T}, \ \forall i \in \mathcal{V}, \label{problem_batt_after:linear_con}}
% \end{mini!}

By Lemma~\ref{lemma:convert_problem}, the problem \eqref{batt_dist_2} can be rewritten as
\begin{mini!}|s|
{{\{\boldsymbol{\xi}_i,\boldsymbol{z}_i\}}_{i=1}^m}{ \sum_{i=1}^{m} \boldsymbol{\xi}_i^\top \overline{\boldsymbol{R}} \boldsymbol{\xi}_i + \sum_{i=1}^{m} \mathcal{I}_{\bar{\Xi}_i}(\boldsymbol{z}_i) \label{problem_batt_after_2:obj}}
{\label{problem_batt_after_2}}{}
\addConstraint{ \boldsymbol{\xi} = \boldsymbol{z} \label{problem_batt_after_2:equality}}
\addConstraint{ \bar{\boldsymbol{H}}^{\top}\bar{\boldsymbol{P}}(\bar{\boldsymbol{H}} \boldsymbol{\xi} - \bar{\boldsymbol{b}}) = \boldsymbol{0}_{m(2m+1)T}, \label{problem_batt_after_2:edge}}
\end{mini!}
where $\boldsymbol{\xi} \triangleq \text{col} \{ \boldsymbol{\xi}_1, \cdots, \boldsymbol{\xi}_m \} \in \mathbb{R}^{m(2m+1)T}$, $\boldsymbol{z} \triangleq \text{col} \{ \boldsymbol{z}_1, \cdots, \boldsymbol{z}_m \} \in \mathbb{R}^{m(2m+1)T}$.
Given two penalty parameters $\rho_1, \rho_2 >0$, the augmented Lagrangian $L(\boldsymbol{\xi}, \boldsymbol{z}, \boldsymbol{y})$ of Problem \eqref{problem_batt_after_2} is 
\begin{equation} \label{eq:augmented_lagrangian_z_battery}
\begin{split}
&L(\boldsymbol{\xi}, \boldsymbol{z}, \boldsymbol{y}) \triangleq \sum_{i=1}^{m} \boldsymbol{\xi}_i^\top \overline{\boldsymbol{R}} \boldsymbol{\xi}_i + \sum_{i=1}^{m} \mathcal{I}_{\bar{\Xi}_i}(\boldsymbol{z}_i) + \boldsymbol{\lambda}^{\top}(\boldsymbol{\xi}-\boldsymbol{z}) +\\
& \frac{\rho_1}{2} \norm{\boldsymbol{\xi}-\boldsymbol{z}}^2 + \boldsymbol{\mu}^{\top} \bar{\boldsymbol{H}}^{\top}\bar{\boldsymbol{P}}(\bar{\boldsymbol{H}} \boldsymbol{\xi} - \bar{\boldsymbol{b}}) + \frac{\rho_2}{2} \norm{\bar{\boldsymbol{P}}(\bar{\boldsymbol{H}} \boldsymbol{\xi} - \bar{\boldsymbol{b}})}^2,
\end{split}
\end{equation}
where $\boldsymbol{y} \triangleq \text{col} \{ \boldsymbol{\lambda}, \boldsymbol{\mu} \} \in \mathbb{R}^{2m(2m+1)T}$ is the Lagrangian multiplier,
$\boldsymbol{\lambda} \in \mathbb{R}^{m(2m+1)T}$ for constraint \eqref{problem_batt_after_2:equality},
$\boldsymbol{\mu} \in \mathbb{R}^{m(2m+1)T}$ for constraint \eqref{problem_batt_after_2:edge}.
Then at each iteration $k$, the distributed update rule is
\begin{subequations} \label{eq:distributed_update_batt}
\begin{align}
\begin{split}
&\boldsymbol{\xi}_{i,k+1} = \arg\min_{\boldsymbol{\xi}_i} \{ \boldsymbol{\xi}_i^\top \overline{\boldsymbol{R}} \boldsymbol{\xi}_i + \boldsymbol{\lambda}_{i,k}^{\top}\boldsymbol{\xi}_i + \frac{\rho_1}{2} \norm{\boldsymbol{\xi}_i-\boldsymbol{z}_{i,k}}^2 \\
&+\boldsymbol{\mu}_{i,k}^{\top}\textstyle\sum_{j \in \mathcal{N}_i} \boldsymbol{P}_{ij}(\boldsymbol{\xi}_i - \boldsymbol{\xi}_{j,k} - \bar{\boldsymbol{b}}_{ij}) \\
&+\frac{\rho_2}{2}\textstyle\sum_{j \in \mathcal{N}_i} \norm{\boldsymbol{P}_{ij}(\boldsymbol{\xi}_i - \boldsymbol{\xi}_{j,k} - \bar{\boldsymbol{b}}_{ij})}^2,
\end{split} \\
& \boldsymbol{z}_{i,k+1} = \arg\min_{ \substack{ \boldsymbol{z}_i \in \Xi_i \\ \overline{\boldsymbol{A}}_i \boldsymbol{z}_i - \overline{\boldsymbol{B}}_i = \boldsymbol{0} } } \norm{\boldsymbol{z}_i - (\boldsymbol{\xi}_{i,k+1}+\frac{1}{\rho_1}\boldsymbol{\lambda}_{i,k})}^2,\\
& \boldsymbol{\lambda}_{i,k+1} = \boldsymbol{\lambda}_{i,k} + \boldsymbol{\xi}_{i,k+1} - \boldsymbol{z}_{i,k+1},\\
& \boldsymbol{\mu}_{i,k+1} = \boldsymbol{\mu}_{i,k} + \sum_{j \in \mathcal{N}_i} \boldsymbol{P}_{ij}(\boldsymbol{\xi}_{i,k+1} - \boldsymbol{\xi}_{j,k+1} - \bar{\boldsymbol{b}}_{ij}).
\end{align}
\end{subequations}

The distributed MPC algorithm for battery network energy management is summarized in Algorithm~\ref{alg:mpc_dist}. In Line 4, the initial value for each node $i$ is computed by forward propagating node $i$'s dynamics from the current state $s_{i,l}$. During this propagation, only node $i$ contributes to the power demand vector $\boldsymbol{P}$, while the other nodes maintain zero control. This initialization only requires local information of each node $i$.
Alternatively, the solution from the previous time step can be used as a warm-started initial value. In Line 6, node $i$'s control $\tilde{\boldsymbol{u}}^*_{i,0:T-1|l}$ within the prediction horizon $[t_l, t_{l+T-1}]$ can be retrieved using the definitions \eqref{eq:control_extended_k}, \eqref{eq:control_extended_horizon}, and \eqref{eq:define_xi_batt}.
Note that the frequency of executing Line 5 can be lower than $1/\Delta$. For simplicity, this paper adopts the same frequency.

\begin{algorithm}
\caption{Distributed MPC Algorithm}\label{alg:mpc_dist}
\DontPrintSemicolon
\KwIn{$l = 0$, $\rho_1, \rho_2 >0$, $N_{\mathrm{iter}}, T \in \mathbb{Z}_+$, $\Delta > 0$, $P(\cdot)$}
\While {true} {
$\textbf{par}$\For {\emph{node} $i = 1 \ to \ m$} {
$\boldsymbol{P} \gets \matt{P(t_l) & P(t_{l+1}) & \cdots & P(t_{l+T-1})}$\;
Given current state $s_{i,l}$, compute initial value for $\boldsymbol{\xi}_{i,k=0}$, $\boldsymbol{z}_{i,k=0}$, $\boldsymbol{\lambda}_{i,k=0}$, $\boldsymbol{\mu}_{i,k=0}$ \;
$ \boldsymbol{z}_{i}^* \gets$ run Algorithm~\ref{alg:algo_proposed} with update rule \eqref{eq:distributed_update_batt}\;
$\tilde{\boldsymbol{u}}^*_{i,0:T-1|l} \gets$ parse solution $\boldsymbol{z}_{i}^*$\;
$\tilde{\boldsymbol{u}}_{i,l} \gets$ interpolate $\tilde{\boldsymbol{u}}^*_{i,0:T-1|l}$ for time duration $[t_l,t_l + \Delta]$ given zero-order hold\;
% forward propagate the dynamics with $\boldsymbol{u}_k$ and $dt_{mpc}$\;
perform $\tilde{\boldsymbol{u}}_{i,l}$ until time $t_l + \Delta$\;
}
$t_{l} \gets t_l + \Delta$, $l \gets l+1$\;
}
\end{algorithm}

\subsection{Numerical Results}

This subsection presents the numerical results for solving the distributed battery network energy management problem using Algorithm~\ref{alg:mpc_dist}, the proposed distributed MPC algorithm.
Fig.~\ref{fig:graph_battery} shows the communication topology of a LiBESS network with $m=6$ nodes.
The parameters used for the LiBESS network are given by Table~\ref{table:para_batt}.
The power demand [kW] is given by:
\begin{equation*}
P(t) = 300\text{sin}(0.005\pi t) + 250\text{sin}(0.003\pi t+20).
\end{equation*}
The parameters used in the algorithm are $\rho_1 = 12$, $\rho_2 = 30$, $N_{\mathrm{iter}}=150$. 

The simulation results are shown in Fig.~\ref{fig:sim_batt}. Fig.~\ref{fig:sim_batt:power} illustrates the desired power demand and the actual power output from the network over time, indicating that the network's controls meet the power demand.
Fig.~\ref{fig:sim_batt:soc} depicts the time trajectory of the SoC for all nodes, showing no violation of SoC constraints.
Additionally, Fig.~\ref{fig:sim_batt:power} presents the time trajectory of the controls for all nodes, constrained by their respective lower and upper bounds.
These simulation results validate the effectiveness of the proposed distributed MPC algorithm.

\begin{figure}[ht]
\centering
\includegraphics[width=0.15\textwidth]{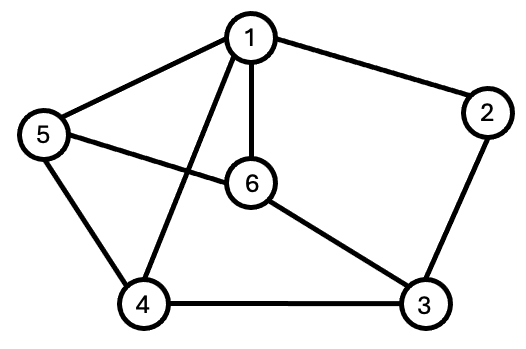}
\caption{Communication topology of a LiBESS network}
\label{fig:graph_battery}
\end{figure}

\begin{table}[ht]
\centering
\begin{threeparttable}
\caption{Parameters of LiBESS network} \label{table:para_batt}
\begin{tabular}{c | c c c c c c}
\toprule
Node $i$ & 1 & 2 & 3 & 4 & 5 & 6 \\
\midrule
$Q_{i,\mathrm{max}}$ [kWh] & 125 & 100  & 80  & 90  & 75  & 200 \\
$\overline{s}_{i}$ [\%]  & 80  & 90   & 90   & 80   & 90   & 80  \\
$\underline{s}_{i}$ [\%] & 30  & 20   & 20   & 30   & 20   & 30  \\
$s_{i,0}$ [\%]           & 50  & 70   & 80   & 80   & 75   & 40  \\
$\overline{u}_{i}$ [kW] & 110 & 100 & 70 & 85 & 60 & 180 \\
$\underline{u}_{i}$ [kW] & -110 & -100 & -70 & -85 & -60 & -180 \\
$r_{i}$                  & 1   & 0.9  & 0.5  & 0.8  & 0.5  & 2   \\
\bottomrule
\end{tabular}
\begin{tablenotes}
\small
\item $\boldsymbol{\eta}_i = \matt{0.9 & 1.1}^\top$, $\forall i \in \mathcal{V}$; $T=20$, $\Delta = 5$ s.
\end{tablenotes}
\end{threeparttable}
\centering
\end{table}

\begin{figure*}[ht]
\centering
\subfloat[Desired power demand and actual power output.]
{\label{fig:sim_batt:power} \includegraphics[width=0.32\linewidth]{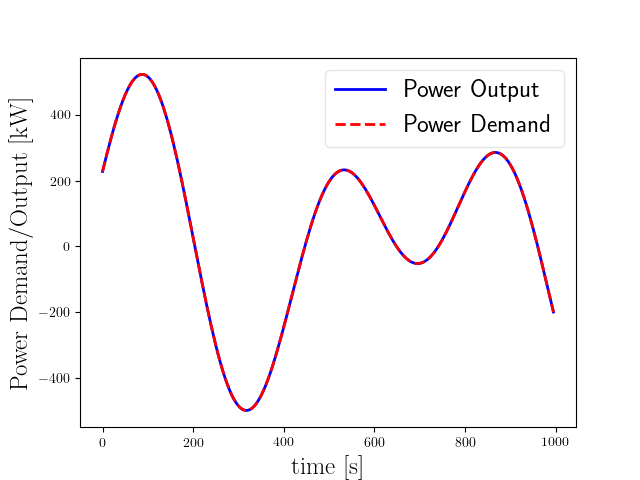}}
\subfloat[SoC trajectories.]
{\label{fig:sim_batt:soc} \includegraphics[width=0.32\linewidth]{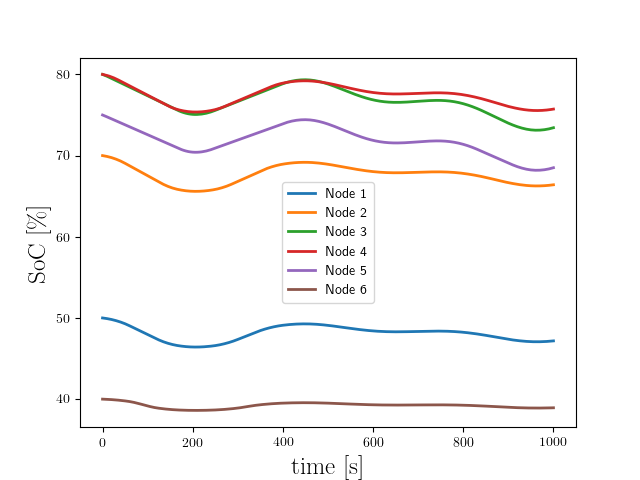}}
\subfloat[Discharging/charging power trajectories.]
{\label{fig:sim_batt:control} \includegraphics[width=0.30\linewidth]{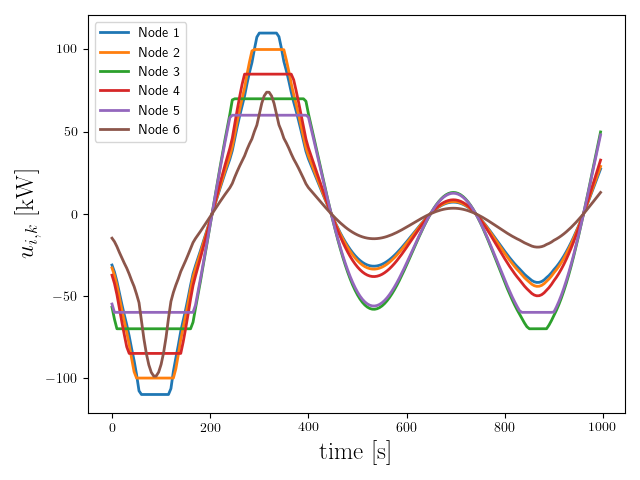}}
\caption{Simulation results of the LiBESS network.} \label{fig:sim_batt}
\end{figure*}

\section{Conclusion} \label{sec:conclusion}

This paper proposes a discrete-time distributed algorithm for solving distributed optimization problems under edge agreements, where each agent is also subject to local convex constraints.
A theoretical analysis is provided to prove the convergence of the proposed algorithm.
Additionally, exemplified by a distributed battery network energy management problem, this paper illustrates the connection between the theory of distributed optimization under edge agreements and distributed MPC. This theoretical framework offers a new perspective to formulate and solve network control or optimization problems, such as those in power grids, battery energy storage systems, autonomous vehicle platooning, and robot cooperative manipulation.

\bibliographystyle{IEEEtran}
\bibliography{Reference}

\end{document}